\numberwithin{equation}{section}
\newtheorem{proposition}{Proposition}[section]
\newtheorem{lemma}[proposition]{Lemma}
\newtheorem{theorem}[proposition]{Theorem}
\theoremstyle{definition}
\newtheorem{remark}[proposition]{Remark}
\newtheorem{definition}[proposition]{Definition}
\newcommand{\rk}{\mathrm{rank}}
\newcommand{\Pic}{\mathrm{Pic}}
\def\K{\mathbb{K}}
\def\R{\mathbb{R}}
\def\Q{\mathbb{Q}}
\def\Z{\mathbb{Z}}
\def\N{\mathbb{N}}
\def\P{\mathbb{P}}
\def\C{\mathbb{C}}
\def\cF{\mathcal{F}}
\def\cE{\mathcal{E}}
\def\cO{\mathcal{O}}
\def\cG{\mathcal{G}}
\def\cT{\mathcal{T}}
\def\cR{\mathcal{R}}
\def\Obj{\mathrm{Obj}}
\def\Gr{\mathrm{Gr}}
\def\rank{\mathrm{rank}}
\def\fE{\mathfrak{E}}
\def\Kl{\mathfrak{Kl}}
\def\Filt{\mathfrak{Filt}}
\def\ep{\varepsilon}
\def\>{\rangle}
\def\<{\langle}
\def\>{\rangle}
\def\Cone{\mathrm{Cone}}
\def\mult{\mathrm{mult}}
\def\Span{\mathrm{Span}}
\title[Toric sheaves and flips]{Toric sheaves and flips}
\author[A. Clarke]{Andrew Clarke}
\address{Andrew Clarke, Instituto de Matem\'atica, Universidade Federal do Rio de Janeiro, Av. Athos da Silveira Ramos 149, Rio de Janeiro, RJ, 21941-909, Brazil}
\email{andrew@im.ufrj.br} 
\author[A. Napame]{Achim NAPAME}
\address{Achim Napame, Instituto de Matem{\'a}tica, Estat{\'i}stica e Computa{\c c}{\~a}o Cient{\'i}fica - UNICAMP, Rua S{\'e}rgio Buarque de Holanda 651, 13083-970 Campinas-SP, Brazil}
\email{achim@unicamp.br}
\author[C. Tipler]{Carl Tipler}
\address{Carl Tipler, Univ Brest, UMR CNRS 6205, Laboratoire de Math\'ematiques de Bretagne Atlantique, France}
\email{Carl.Tipler@univ-brest.fr}
\subjclass[2010]{Primary: 14M25, Secondary: 14J60}
\keywords{Toric flips, equivariant sheaves, slope-stability}
\begin{document}
\def\Ref{\mathfrak{Ref}}

\begin{abstract}
 Any toric flip naturally induces an equivalence between the associated categories of equivariant reflexive sheaves, and we investigate how slope stability behaves through this functor. On one hand, for a fixed toric sheaf, and natural polarisations that make the exceptional loci small, we provide a simple numerical criterion that characterizes when slope stability is preserved through the flip. On the other hand, for a given flip, we introduce full subcategories of logarithmic toric sheaves and characterize when polystability is preserved for all toric sheaves in those subcategories at once.
\end{abstract}

\maketitle

\section{Introduction}
\label{sec:intro}

Introduced by Mumford \cite{Mum62} and generalized by Takemoto \cite{Tak72}, slope stability of vector bundles, and more generally of torsion-free coherent sheaves, can be used as a device to produce moduli spaces. While slope stability is not a GIT notion in higher dimension, it behaves well with respect to tensor products and restrictions, and has a differential geometric interpretation in gauge theory through the Hitchin--Kobayashi correspondence (see e.g. \cite{Kob87} and references therein). In particular, stable bundles, and more generally stable reflexive sheaves (\cite{BS}), are of particular interest in gauge theory and mathematical physics (see e.g. \cite{KnuSha98} for a survey on stable sheaves on toric varieties addressed to the mathematical physics community). 

Despite its usefulness, checking stability in practice remains a difficult problem. Our goal is to add to the list of known methods to produce stable sheaves via transformations of the underlying polarised manifold. In the equivariant context of toric geometry, the behaviour of slope stability through descent under GIT quotients was studied in \cite{CT22}, while the problem of pulling back stable sheaves on fibrations was considered in \cite{NapTip} (note though that stability is not necessary to produce moduli spaces of equivariant toric bundles, cf \cite{payne2008}). In this paper, we study how slope stability is affected through a toric flip between polarised (simplicial) toric  varieties. Those transformations are of particular interest for several reasons. From the complex geometry point of view, they form building blocks for the toric Minimal Model Program (see \cite[Ch. 15]{CLS} and references therein), and together with fibrations and blow-ups addressed in \cite{NapTip}, our results complete the study of slope stability through any type of extremal contraction arising in toric MMP. From the mathematical physics perspective, toric flips can be seen as singular transitions between toric varieties.
Those transitions are of particular importance given the construction of Calabi--Yau hypersurfaces in toric Gorenstein Fano varieties (\cite{Batyrev94}) and the connections between various Calabi--Yau vacua through conifold transitions (\cite{Candelas,Reid}). Our results then provide a toy model in the study of stable sheaves through singular transitions between toric varieties (see \cite{CPY} for a differential geometric approach to stability of the tangent bundle through conifold transitions).

Consider a toric flip (see Section \ref{sec:toricflips} for the precise definitions):
$$
 \begin{tikzcd}
  X   &  & X' \arrow[ddl, rightarrow,"{\phi'}"']\arrow[ll,dashleftarrow, "{\psi}"'] \\
   & & \\
  & X_0 \arrow[uul, leftarrow, "{\phi}"'] &  
\end{tikzcd}
 $$
between two simplicial toric varieties $X$ and $X'$.
There is a $\Q$-Cartier divisor $D_+\subset X$ naturally attached to the flip, such that $-D_+$ is $\phi$-ample and restricts to the anticanonical divisor of the $\phi$-contracted fibers (see Section \ref{sec:exceptionalloci}). By abuse of notation, we will still denote $D_+$ the divisor $\psi_*(D_+)\subset X'$. Then, for some ample Cartier divisor $L_0$ on $X_0$, there exists $\ep_0 >0$ such that the divisors
$$
L_{-\ep}:=\phi^*L_0 - \ep D_+\subset X
$$
and 
$$
L_{\ep}:=(\phi')^*L_0 + \ep D_+\subset X'
$$
define ample $\Q$-Cartier divisors for $\ep\in (0,\ep_0)$. Then, our first result (Theorem \ref{theo:main}) relates slope stability of toric sheaves on $(X_0, L_0)$, $(X, L_{-\ep})$ and $(X', L_\ep)$ :

\begin{theorem}
 \label{theo:mainintro}
 Let $\cE$ be a torus equivariant reflexive sheaf on $X$. Then, up to shrinking $\ep_0$, we have for all $\ep\in(0,\ep_0)$ :
 \begin{enumerate}
  \item[(i)] If $\phi_*\cE$ is $L_0$-stable, then $\cE$ (resp. $\psi_*\cE$) is $L_{-\ep}$-stable on $X$ (resp. $L_\ep$-stable on $X'$).
  \item[(ii)] If  $\phi_*\cE$ is $L_0$-unstable, then $\cE$ (resp. $\psi_*\cE$) is $L_{-\ep}$-unstable on $X$ (resp. $L_\ep$-unstable on $X'$).
  \item[(iii)] If $\phi_*\cE$ is $L_0$-semistable, let $\fE$ be the finite set of equivariant and saturated reflexive subsheaves $\cF \subseteq \phi_* \cE$ appearing in a Jordan--H\"older filtration of $\phi_* \cE$. If for every $\cF \in \fE$,
  \begin{equation}
   \label{eq:numericalcriterion}
  \dfrac{c_1(\cE) \cdot D_{+} \cdot (\phi^* L_0)^{n-2}}{\rk(\cE)} <
  \dfrac{c_1((\phi^*\cF)^{\vee \vee}) \cdot D_{+} \cdot (\phi^* L_0)^{n-2}}{\rk(\cF)}
  \end{equation}
  then $\cE$  is $L_{-\ep}$-stable on $X$.
 \end{enumerate}
\end{theorem}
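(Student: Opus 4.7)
The plan is to expand slopes as functions of the small parameter $\ep$ around $\ep=0$ and to deduce stability from the leading non-trivial terms. Since $\phi$ is a small contraction (an isomorphism in codimension one), equivariant saturated reflexive subsheaves of $\cE$ correspond bijectively to those of $\phi_*\cE$ via $\cF'\mapsto \phi_*\cF'$ and $\cF \mapsto (\phi^*\cF)^{\vee\vee}$, preserving rank and first Chern class in the sense that the projection formula gives $c_1(\cF')\cdot (\phi^*L_0)^{n-1} = c_1(\phi_*\cF')\cdot L_0^{n-1}$. Expanding
$$
L_{-\ep}^{n-1} = (\phi^*L_0)^{n-1} - (n-1)\,\ep\, D_+\cdot (\phi^*L_0)^{n-2} + O(\ep^2),
$$
I obtain for every equivariant saturated reflexive $\cF'\subset \cE$, with $\cF = \phi_*\cF'$,
$$
\mu_{L_{-\ep}}(\cF') = \mu_{L_0}(\cF) - (n-1)\,\ep\,\frac{c_1(\cF')\cdot D_+\cdot (\phi^*L_0)^{n-2}}{\rk(\cF)} + O(\ep^2),
$$
with an identical expansion for $\cE$ itself, and a symmetric statement on $X'$ using $\psi_*$ and $L_\ep$.

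For a uniform $\ep_0$, I would invoke Klyachko's combinatorial description of equivariant reflexive sheaves on a toric variety, which implies that the set of pairs $(\rk(\cF'), c_1(\cF'))$ realized by equivariant saturated reflexive subsheaves of the fixed sheaf $\cE$ is finite. A single $\ep_0>0$ then controls all the leading-term inequalities below simultaneously.

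Cases $(i)$ and $(ii)$ are now immediate. In $(i)$, $L_0$-stability of $\phi_*\cE$ yields a strict inequality $\mu_{L_0}(\cF) < \mu_{L_0}(\phi_*\cE)$ for every $\cF$ in the relevant finite list, uniformly, and the $O(\ep)$ correction cannot reverse it for small $\ep$, so $\cE$ is $L_{-\ep}$-stable and $\psi_*\cE$ is $L_\ep$-stable symmetrically. In $(ii)$, any equivariant saturated destabilizing $\cG\subset \phi_*\cE$ pulls back to $(\phi^*\cG)^{\vee\vee}\subset \cE$, which destabilizes at order $\ep^0$, and likewise on $X'$. Case $(iii)$ is the substantive one: for any equivariant saturated reflexive $\cF'\subset \cE$, semistability forces $\mu_{L_0}(\phi_*\cF')\le \mu_{L_0}(\phi_*\cE)$. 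If strict, the case $(i)$ argument handles it. If equal, I claim $\cF := \phi_*\cF'$ lies in $\fE$: both $\cF$ and $\phi_*\cE/\cF$ are $L_0$-semistable of the same slope, so concatenating a Jordan--Hölder filtration of $\cF$ with one of $\phi_*\cE/\cF$ (lifted to a filtration between $\cF$ and $\phi_*\cE$) produces a Jordan--Hölder filtration of $\phi_*\cE$ through $\cF$. Hypothesis (1.1) applied to this $\cF$ is precisely the statement that the $O(\ep)$ correction for $\cF'$ is strictly smaller than that for $\cE$, so $\mu_{L_{-\ep}}(\cF') < \mu_{L_{-\ep}}(\cE)$ for all $\ep\in(0,\ep_0)$, giving $L_{-\ep}$-stability of $\cE$.

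The main obstacle will be making the Jordan--Hölder reduction of case $(iii)$ rigorous inside the category of equivariant saturated reflexive sheaves, in particular verifying that the finite set $\fE$ of the statement does contain every equivariant saturated reflexive subsheaf of $\phi_*\cE$ with maximal slope (so that condition (1.1) on $\fE$ suffices to control all potential obstructions at first order). The Klyachko finiteness needed for a uniform $\ep_0$ is the other technical ingredient, but is by now standard in the toric stability literature.
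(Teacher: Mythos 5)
Your proposal is correct and follows essentially the same route as the paper: the $\ep$-expansion of slopes via the projection formula, a uniform $\ep_0$ obtained from the finiteness of the slope data of equivariant saturated reflexive subsheaves (Kool's reduction plus Klyachko's description), the zeroth-order term handling $(i)$, $(ii)$ and the strictly-smaller-slope subsheaves, and the Jordan--H\"older concatenation argument showing that every maximal-slope equivariant saturated subsheaf of $\phi_*\cE$ lies in $\fE$. The one ingredient of the paper's proof you omit --- the identity $\phi_*(D_\rho\cdot D_+)=\phi'_*(D'_\rho\cdot D'_+)$ --- is only needed for the $X'$-side conclusions of items $(iii)$--$(iv)$ of Theorem \ref{theo:main}, not for the statement as given here.
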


The statements $(i)$ and $(ii)$ also follow from the openness of stability \cite[Theorem 3.3]{GKPeternell15}. Indeed, if $\phi_{*}\cE$ is $L_0$-stable (resp. $L_0$-unstable), then by the projection formula, $\cE$ is stable (resp. unstable) with respect to the nef and big divisor
$\phi^* L_0$.

A similar statement as $(iii)$ holds for $\psi_*\cE$ with the reverse inequalities in (\ref{eq:numericalcriterion}) (see Theorem \ref{theo:main}). The intersection number that appears in  (\ref{eq:numericalcriterion}) is the first order term in the $\ep$-expansion of the $L_{-\ep}$-slope. As Theorem \ref{theo:mainintro} shows, if $\phi_*\cE$ is strictly semistable, this term will never allow for both $\cE$ and $\psi_*\cE$ to be stable at the same time, for the considered polarisations.  As $\fE$ is finite, the numerical criterion in $(iii)$ can be used in practice to produce examples of toric sheaves that go from being unstable to stable through a toric flip (see Section \ref{sec:example}). 
\begin{remark}
 Theorem \ref{theo:mainintro} actually holds uniformly for flat families of toric sheaves with fixed characteristic function (see the discussion at the end of Section \ref{sec:main}).
\end{remark}
While our first result focuses on specific flat families of sheaves for a given flip, our second result describes toric flips that preserve slope polystability for {\it all} equivariant reflexive sheaves at once, in some adapted full subcategories. Denote by $\Ref^T(X)$ the category of torus equivariant reflexive sheaves on $X$. For any given torus invariant divisor $D\subset X$, we introduce in  Section \ref{sec:logcase} a full subcategory $\Ref^T(X,D)$  of {\it logarithmic toric sheaves}. Our terminology is inspired by the fact that the logarithmic tangent sheaf $\cT_X(-\log D)$ belongs to $\mathrm{Obj}(\Ref^T(X,D))$. Setting $D'=\psi_*D$, the birational transformation $\psi : X \dashrightarrow X'$ induces an equivalence of categories (still denoted $\psi_*$) between $\Ref^T(X,D)$ and $\Ref^T(X',D')$. We will say that the functor $\psi_*$ preserves polystability for a pair of ample classes $(\alpha,\alpha')\in \Pic(X)_\Q\times \Pic(X')_\Q$ if for any $\cE\in \mathrm{Obj}(\Ref^T(X,D))$, $\cE$ is polystable on $(X,\alpha)$ if and only if $\psi_*\cE$ is polystable on $(X',\alpha')$. Then, our result is as follows (see Theorem \ref{theo:second}) :

\begin{theorem}
 \label{theo:secondintro}
 Let $\Sigma$ be the fan of $X$, and let $D$ be the torus invariant divisor
 $$
 D:=\sum_{\rho\in \Delta} D_\rho \subset X,
 $$
  for $\Delta\subset \Sigma(1)$. Then, the following assertions are equivalent, for a pair of ample classes $(\alpha,\alpha')\in \Pic(X)_\Q\times \Pic(X')_\Q$.
 \begin{enumerate}
  \item[(i)] The functor $\psi_* : \Ref^T(X,D) \to \Ref^T(X',D)$ preserves polystability for $(\alpha,\alpha')$.
  \item[(iii)] There is $c \in\Q_{>0}$ such that for all $\rho\notin \Delta$, $$\deg_\alpha D_\rho = c\; \deg_{\alpha'} D_\rho'.$$
 \end{enumerate}
\end{theorem}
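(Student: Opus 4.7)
My approach rests on Klyachko's equivalence between torus equivariant reflexive sheaves on $X$ and families of decreasing $\Z$-filtrations $\{E^\rho\}_{\rho\in\Sigma(1)}$ on a fixed finite-dimensional vector space $E$. Because a flip only modifies maximal cones, $\Sigma(1)=\Sigma'(1)$ and $\psi_*$ acts on Klyachko data by transporting filtrations unchanged. I expect the subcategory $\Ref^T(X,D)$ to be carved out by a rigidity condition along the rays $\rho\in\Delta$, such as requiring $E^\rho$ to be a shifted trivial filtration (as is the case for $\cT_X(-\log D)$); the essential point for me is that this condition is stable under taking equivariant saturated subsheaves and under $\psi_*$, so every relevant slope comparison stays inside the subcategory.

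From Klyachko's slope formula one has
\[
\mu_\alpha(\cE) \;=\; \frac{1}{\rk(\cE)}\sum_{\rho\in\Sigma(1)} d^\rho(\cE)\,\deg_\alpha(D_\rho),
\]
where the integer $d^\rho(\cE)$ depends only on $E^\rho$. Under the rigidity condition, $d^\rho(\cE)=a_\rho\rk(\cE)$ for $\rho\in\Delta$, with $a_\rho$ determined by the log data alone, so the contribution of such rays to $\mu_\alpha(\cE)$ is a constant $\sum_{\rho\in\Delta}a_\rho\deg_\alpha(D_\rho)$ independent of $\cE\in\Ref^T(X,D)$. This constant cancels in every slope difference, and for any equivariant saturated reflexive subsheaf $\cF\subset\cE$ in the subcategory,
\[
\mu_\alpha(\cE)-\mu_\alpha(\cF) \;=\; \frac{1}{\rk(\cE)\rk(\cF)}\sum_{\rho\notin\Delta}\deg_\alpha(D_\rho)\,M^\rho(\cE,\cF),
\]
where $M^\rho(\cE,\cF)\in\Z$ depends only on the filtrations. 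The same expression holds on $(X',\alpha')$ after replacing $\alpha$ by $\alpha'$ and $D_\rho$ by $D_\rho'$, since $\psi_*$ does not alter the Klyachko data.

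The implication (iii) $\Rightarrow$ (i) is then immediate: if $\deg_\alpha D_\rho=c\,\deg_{\alpha'}D_\rho'$ for all $\rho\notin\Delta$, the two slope differences agree up to the positive scalar $c$, so destabilising subsheaves and polystable direct summands match on both sides under $\psi_*$. For (i) $\Rightarrow$ (iii), I would use explicit test sheaves. Given $\rho_1,\rho_2\in\Sigma(1)\setminus\Delta$ and rationals $a,b$, the rank-two direct sum $\cE=\cO_X(aD_{\rho_1})\oplus\cO_X(bD_{\rho_2})$ lies in $\Ref^T(X,D)$ because each summand has the default (trivial) filtration on every $\rho\in\Delta$. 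It is $\alpha$-polystable iff $a\deg_\alpha D_{\rho_1}=b\deg_\alpha D_{\rho_2}$, and $\psi_*\cE=\cO_{X'}(aD_{\rho_1}')\oplus\cO_{X'}(bD_{\rho_2}')$ is $\alpha'$-polystable iff $a\deg_{\alpha'}D_{\rho_1}'=b\deg_{\alpha'}D_{\rho_2}'$. Requiring both for the same nontrivial $(a,b)$ forces $\deg_\alpha D_{\rho_1}/\deg_{\alpha'}D_{\rho_1}'=\deg_\alpha D_{\rho_2}/\deg_{\alpha'}D_{\rho_2}'$. As $\rho_1,\rho_2$ range over $\Sigma(1)\setminus\Delta$ and all these degrees are positive (ample classes paired with effective prime divisors), this common ratio defines the required $c\in\Q_{>0}$.

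The main obstacle I anticipate is pinning down the precise definition of $\Ref^T(X,D)$ and verifying that the rigidity along $\rho\in\Delta$ is preserved by equivariant saturated subsheaves; this is what decouples the fixed logarithmic data from the free stability computation over $\rho\notin\Delta$, and is the key technical input for (iii) $\Rightarrow$ (i). A secondary, more bookkeeping check is that the rank-two split test sheaves used in the converse genuinely lie in the subcategory and that rational $(a,b)$ can always be chosen to realise the claimed (non)polystability.
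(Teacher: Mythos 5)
Your proposal is correct and follows essentially the same route as the paper: Klyachko's slope formula, the observation that the shifted-trivial filtrations along $\rho\in\Delta$ contribute a term proportional to the rank (hence cancelling in all slope comparisons, since saturated equivariant subsheaves inherit the same jump indices), and rank-two split test sheaves $\cO_X(aD_{\rho_1})\oplus\cO_X(bD_{\rho_2})$ with $\rho_1,\rho_2\notin\Delta$ for the converse. The paper's proof of $(i)\Rightarrow(iii)$ simply makes your choice of $(a,b)$ explicit, taking $a=d\deg_\alpha(D_{\rho_2})$ and $b=d\deg_\alpha(D_{\rho_1})$ so that the test sheaf is $\alpha$-polystable by construction.
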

In the above statement, $D_\rho$ stands for the torus invariant divisor associated to a ray $\rho \in\Sigma(1)$, and 
$ \deg_\alpha$ stands for the degree of a divisor on $(X,\alpha)$. We should point out that condition $(iii)$ becomes very restrictive when $\Delta$ is small, while $\Ref^T(X,D)$ becomes smaller for larger $\Delta$'s. Nevertheless, Theorem \ref{theo:secondintro} provides a simple numerical criterion for pairs of classes on $X$ and $X'$ to preserve polystability of  specific toric sheaves through the toric flip.

\begin{remark}
 Our approach to prove Theorem \ref{theo:mainintro} and Theorem \ref{theo:secondintro} uses Klyachko's description of toric sheaves and Kool's formula for the slope of such objects (cf \cite{Kly90} and \cite{Koo11}). It would be interesting to see how the recent work by Devey \cite{Dev22} on stable toric sheaves can be used to approach those results.
\end{remark}

\subsection*{Acknowledgments}  
The authors would like to thank the anonymous referees for their helpful advice.
The authors are partially supported by the grant BRIDGES ANR--FAPESP ANR-21-CE40-0017. AN and CT also benefited from the grant MARGE ANR-21-CE40-0011. AC is also partially supported by the grant { Projeto CAPES - PrInt UFRJ 88887.311615/2018-00}.

\subsection*{Notations and conventions}
All varieties we will consider will be normal toric varieties over the complex numbers. For such a variety $X$, we denote by $T$ its torus, $N$ its lattice of one parameter subgroups, $M$ its lattice of characters. For $\K$ a field of characteristic zero, and a lattice $W$, we denote $W_\K:=W\otimes_\Z \K$. The fan of $X$ will be denoted $\Sigma_X$, or simply $\Sigma$ when the situation is clear enough, and we will also use the notation $X_\Sigma$ for the variety associated to the fan $\Sigma$. For a given cone $\sigma\in\Sigma$, we let $U_\sigma=\mathrm{Spec}^m(\C[\sigma^\vee\cap M])$ the affine chart in $X$, $O(\sigma)\subset X$ the orbit associated to $\sigma$ by the orbit-cone correspondence, and $V(\sigma)$ the closure in $X$ of $O(\sigma)$. Finally, for a fan $\Sigma$ and a subset $S\subset N_\R$, we denote by $\Sigma_S=\lbrace \sigma\in\Sigma\:\vert\: \sigma\subset S \rbrace$.

\section{Background on toric flips}
\label{sec:toricflips}
\subsection{Toric flips}
\label{sec:flips}
We recall in this section the basics on toric flips (and refer the reader to \cite[Section 3.3]{CLS} for the definition of toric morphisms and their fan description). While our presentation differs slightly from \cite[Chapter 15]{CLS}, we will keep most of the notations from this book, and the properties that we list here can be recovered from \cite[Lemma 15.3.7, Lemma 15.3.11 and Theorem 15.3.13]{CLS}. Let $N$ be a rank $n$ lattice, with $n\geq 3$.
\begin{definition}
\label{def:flippingcone}
 A full dimensional strictly convex cone $\sigma_0\subset N_\R$ will be called a {\it flipping cone} if there exist primitive elements $\lbrace \nu_1,\ldots,\nu_{n+1} \rbrace \subset N$ such that 
\begin{enumerate}
\item The cone $\sigma_0$ is spanned by the $\nu_i$'s : 
$$\sigma_0=\mathrm{Cone}(\nu_1,\ldots,\nu_{n+1}),$$
\item there exists 
$(b_1,\ldots,b_{n+1})\in\Z^{n+1}$ such that $$\sum_{i=0}^{n+1} b_i \nu_i =0,
$$
\item the sets $J_-=\lbrace i\:\vert\: b_i <0 \rbrace$ and $J_+=\lbrace i\:\vert\: b_i >0 \rbrace$ both contain at least $2$ elements.
\end{enumerate}
\end{definition}
For a given flipping cone $\sigma_0\subset N_\R$ as in Definition \ref{def:flippingcone}, we will denote $$J_0=\lbrace i\:\vert\: b_i =0 \rbrace.$$
We also introduce the notation, for any $J\subset \lbrace 1,\ldots, n+1\rbrace$,
$$
\sigma_J=\mathrm{Cone}(\nu_i\:\vert\: i\in J)
$$
 together with the fans 
$$
\Sigma_-=\lbrace \sigma_J\:\vert\: J_+\nsubseteq J \rbrace
$$
and 
$$
\Sigma_+=\lbrace \sigma_J\:\vert\: J_-\nsubseteq J \rbrace.
$$
Identifying $\sigma_0$ with the fan of its faces, we see that $\Sigma_-$ and $\Sigma_+$ provide refinements of $\sigma_0$. Those refinements induce toric morphisms $\phi_\pm : X_{\Sigma_\pm} \to U_{\sigma_0}$, whose properties are listed below (see \cite[Lemma 15.3.11.(c)]{CLS}) :
\begin{lemma}
 \label{lem:localtoricflip}
 The morphisms $\phi_\pm : X_{\Sigma_\pm} \to U_{\sigma_0}$ are surjective and birational. Their exceptional loci are given by $V(\sigma_{J_\pm})\subset X_{\Sigma_\pm}$ and satisfy $\phi_\pm(V(\sigma_{J_\pm}))=V(\sigma_{J_-\cup J_+})$ and $\dim V(\sigma_{J_\pm})=n-\vert J_\pm \vert$, while $\dim V(\sigma_{J_-\cup J_+})=\vert J_0 \vert$.
\end{lemma}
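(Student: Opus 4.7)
The plan is to leverage the standard correspondence between toric morphisms and fan refinements, together with a careful analysis of which cones $\sigma_J$ are faces of $\sigma_0$. I would begin by observing that the space of linear relations among $\nu_1, \ldots, \nu_{n+1}$ is one-dimensional---$n+1$ vectors spanning the $n$-dimensional $N_\R$---so the relation $\sum_i b_i \nu_i = 0$ is unique up to scalar. It follows that for every $J \subsetneq \{1, \ldots, n+1\}$ with $J \not\supseteq J_- \cup J_+$, the family $\{\nu_i\}_{i \in J}$ is linearly independent and $\sigma_J$ is simplicial of dimension $|J|$. In particular $\dim \sigma_{J_\pm} = |J_\pm|$, while the relation forces $\dim \sigma_{J_- \cup J_+} = |J_-| + |J_+| - 1 = n - |J_0|$; the dimension claims on $V(\sigma_{J_\pm})$ and $V(\sigma_{J_- \cup J_+})$ then follow from the orbit--cone correspondence $\dim V(\tau) = n - \dim \tau$.

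Next I would verify that $\Sigma_\pm$ are genuine fans refining the fan of faces of $\sigma_0$ (the hypothesis $|J_\pm| \geq 2$ ensures that no cone degenerates and that the supports agree). Since these refinements share the support $\sigma_0$, the induced toric morphisms $\phi_\pm$ are proper. Because $\{0\}$ lies in every fan and is mapped to itself, $\phi_\pm$ restricts to the identity on the dense torus, hence is birational; properness then forces surjectivity onto $U_{\sigma_0}$.

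The crux is identifying the exceptional locus. By general toric theory, $\phi_\pm$ fails to be a local isomorphism precisely along the orbits $O(\tau)$ for $\tau \in \Sigma_\pm$ that are not faces of $\sigma_0$. The key claim I would establish is: $\sigma_J$ is a face of $\sigma_0$ if and only if $J \not\supseteq J_+$ and $J \not\supseteq J_-$. The nontrivial implication uses the supporting hyperplane criterion: any $u \in M_\R$ nonnegative on $\sigma_0$ and vanishing on $\sigma_{J_-}$ must also vanish on $\sigma_{J_+}$, because $\sum_{i \in J_+} b_i u(\nu_i) = -\sum_{i \in J_-} b_i u(\nu_i) = 0$ with $b_i > 0$ and $u(\nu_i) \geq 0$ for $i \in J_+$. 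Consequently the cones $\sigma_J \in \Sigma_-$ failing to be faces of $\sigma_0$ are exactly those with $J \supseteq J_-$, and the union of the corresponding orbits is precisely $V(\sigma_{J_-})$; the case of $\phi_+$ is symmetric.

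Finally, to identify the image of $V(\sigma_{J_\pm})$ I would locate the smallest face of $\sigma_0$ containing $\sigma_{J_\pm}$. By the previous analysis any such face must contain $\sigma_{J_- \cup J_+}$; conversely, I would exhibit a functional $u$ with $u(\nu_i) = 0$ for $i \in J_- \cup J_+$ and $u(\nu_i) > 0$ for $i \in J_0$, which is possible because evaluation at $\{\nu_i\}_{i \in J_0}$ is an isomorphism from the $|J_0|$-dimensional space of $u$'s vanishing on $\mathrm{span}(\nu_i : i \in J_- \cup J_+)$ onto $\R^{|J_0|}$. Thus $\sigma_{J_- \cup J_+}$ really is a face of $\sigma_0$ (of the predicted dimension), and $\phi_\pm(V(\sigma_{J_\pm})) = V(\sigma_{J_- \cup J_+})$ follows from the orbit--cone correspondence. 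The main obstacle I anticipate is the face characterization via supporting hyperplanes---once that is in hand, the rest is bookkeeping with orbits and cones.
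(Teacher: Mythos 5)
Your argument is correct in substance, but it is worth noting that the paper does not actually prove this lemma: it imports it wholesale from \cite[Lemmas 15.3.7, 15.3.11 and Theorem 15.3.13]{CLS}. What you have written is essentially a self-contained reconstruction of that standard argument, and all the main ingredients are the right ones: uniqueness (up to scalar) of the relation among the $\nu_i$, which gives simpliciality of the cones $\sigma_J$ with $J\nsupseteq J_-\cup J_+$ and the dimension counts $\dim\sigma_{J_\pm}=\vert J_\pm\vert$, $\dim \sigma_{J_-\cup J_+}=n-\vert J_0\vert$; properness from equality of supports; and the supporting-hyperplane argument showing that any face of $\sigma_0$ containing $\sigma_{J_-}$ must contain $\sigma_{J_+}$ (and symmetrically), which is the real content behind both the identification of the exceptional locus and the computation of its image $V(\sigma_{J_-\cup J_+})$.

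Two points of precision. First, your ``key claim'' is false as literally stated: for $J\supseteq J_-\cup J_+$ the cone $\sigma_J$ \emph{is} a face of $\sigma_0$ --- indeed you prove exactly this for $J=J_-\cup J_+$ in your last paragraph, via the functional vanishing on $\sigma_{J_-\cup J_+}$ and positive on the $\nu_i$, $i\in J_0$. The correct statement is that for $\sigma_J\in\Sigma_\mp$ (so $J\nsupseteq J_\pm$ by definition), $\sigma_J$ is a face of $\sigma_0$ if and only if $J\nsupseteq J_\mp$; since you only ever apply the claim to cones of $\Sigma_\pm$, the conclusion you draw is unaffected, but the claim should be stated with this restriction. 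Second, the step from ``any $u\in\sigma_0^\vee$ vanishing on $\sigma_{J_-}$ vanishes on $\sigma_{J_+}$'' to ``$\sigma_J$ is not a face when $J\supseteq J_-$, $J\nsupseteq J_+$'' silently uses that $\sigma_{J_+}\nsubseteq\sigma_J$; this needs a one-line argument (a relation expressing $\nu_k$, $k\in J_+\setminus J$, as a combination of the $\nu_i$, $i\in J$, would have to be proportional to the wall relation and hence would involve negative coefficients on $J_+\cap J$). Similarly, the assertion that $\Sigma_\pm$ are fans with support exactly $\sigma_0$ deserves a word: one shifts the coefficients of a point $\sum\lambda_i\nu_i\in\sigma_0$ by a suitable multiple of $(b_i)$ to kill a coefficient indexed by $J_+$ (resp.\ $J_-$) while keeping all coefficients nonnegative. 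These are gaps of exposition rather than of substance.
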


We now introduce the notion of toric flips that we will use in this paper.
If $W$ is a subset of the support of a fan $\Sigma$, we define the restriction $\Sigma_{\vert W}$ by
$$
\Sigma_{\vert W} = \lbrace \sigma \in \Sigma : \sigma \subset W \rbrace.
$$

\begin{definition}
 \label{def:toricflip}
Let $X$ and $X'$ be two $n$-dimensional simplicial toric  varieties with fans $\Sigma$ and $\Sigma'$ and common lattice of one-parameter subgroups $N$. We will say that they are related by a {\it toric flip} if there exists a normal toric variety $X_0$ with fan $\Sigma_0$ containing  a flipping cone $\sigma_0\in\Sigma_0$ such that
 $$
 \Sigma_{\vert \sigma_0}=\Sigma_+,\: \Sigma_{\vert \sigma_0}'=\Sigma_-\: \textrm{ and } 
 \Sigma_{\vert N_\R\setminus \sigma_0}= \Sigma_{\vert N_\R\setminus \sigma_0}'=( \Sigma_0)_{\vert N_\R\setminus \sigma_0}.
 $$
 In this situation, the refinements $\Sigma$ and $\Sigma'$ of $\Sigma_0$ induce toric morphisms $\phi : X\to X_0$ and $\phi':X'\to X_0$, the latter being called the {\it flip} of the former.
\end{definition}
In Definition \ref{def:toricflip}, the fans $\Sigma_\pm$ are those associated to the flipping cone $\sigma_0$ as described above. Note that the definition, together with Lemma \ref{lem:localtoricflip}, implies that $X$ and $X'$ are birational and isomorphic in codimension $2$, and that $\Sigma_0 \setminus (\sigma_{J_-\cup J_+})$ is simplicial. The situation of Definition \ref{def:toricflip} can be better summarized in the following commutative diagram :
 $$
 \begin{tikzcd}
 \label{diag:toricflip}
  X   & & &  & X' \arrow[ddll, rightarrow,"{\phi'}"']\arrow[llll,dashleftarrow, "{\psi}"'] \\
   & & & & \\
 V(\sigma_{J_+})\arrow[uu,rightarrow, "{\iota_+}"']   & & X_0 \arrow[uull, leftarrow, "{\phi}"'] & & V(\sigma_{J_-})\arrow[ddll, rightarrow,"{\phi_-}"']\arrow[uu,rightarrow, "{\iota_-}"'] \\ 
   & & & & \\
  & & V(\sigma_{J_+\cup J_-})  \arrow[uull, leftarrow, "{\phi_+}"']\arrow[uu,rightarrow, "{\iota_0}"'] & &
\end{tikzcd}
 $$
The maps $\phi$ and $\phi'$ are the toric morphisms induced by the refinements $\Sigma$ and $\Sigma'$ of $\Sigma_0$, while the maps $\iota_+$, $\iota_-$ and $\iota_0$ denote inclusions. Finally,  $$\psi=(\phi')^{-1}\circ\phi : X \dashrightarrow X'$$ is the birational morphism that is an isomorphism away from    $V(\sigma_{J_\pm})$. From now on, we fix $X$ and $X'$ two simplicial toric varieties related by a toric flip $\psi$, and retain the notation from the previous diagram.
\subsection{The exceptional loci and relatively ample divisors}
\label{sec:exceptionalloci}
We will be interested in stability later on, whose definition requires that the varieties be polarised. We hence turn to the description of the fibers of $\phi_\pm$, and describe some $\phi$-ample (and $\phi'$-ample) divisors. For a cone $\sigma\subset N_\R$, we denote $N_\sigma\subset N$ the sublattice
$$
N_\sigma=\mathrm{Span}(\sigma)\cap N,
$$
and the quotient map 
$$\pi_\sigma : N \to N(\sigma):=N/N_\sigma.$$
Recall that from the orbit-cone correspondence (see \cite[Section 3.2, Proposition 3.2.7]{CLS}), the toric variety $V(\sigma)$ can be obtained as the toric variety associated to the fan of cones in $(N/N_{\sigma})_\R$ :
$$
\mathrm{Star}(\sigma)=\lbrace \pi_\sigma(\tau)\:\vert\: \sigma \preceq \tau \rbrace.
$$
 In particular, the lattices of one parameter subgroups of $V(\sigma_{J_-})$ and $V(\sigma_{J_-\cup J_+})$ are respectively $N/N_{\sigma_{J_-}}$ and $N/N_{\sigma_{J_-\cup J_+}}$. One can show that the projection map 
$$
 N/N_{\sigma_{J_-}}\to N/N_{\sigma_{J_-\cup J_+}}
$$
is compatible with the fans $\mathrm{Star}(\sigma_{J_-})$ and $\mathrm{Star}(\sigma_{J_-\cup J_+})$ and induces the toric morphism 
$$
\phi_- : V(\sigma_{J_-}) \to V(\sigma_{J_-\cup J_+}).
$$
The lattices fit naturally in the sequence
$$
0\to N_{\sigma_{J_-\cup J_+}}/N_{\sigma_{J_-}}  \to N/N_{\sigma_{J_-}}\to N/N_{\sigma_{J_-\cup J_+}} \to 0.
$$
As we are interested in the fibers of $\phi_-$, we introduce the quotient lattice
$$
N_\cR:=N_{\sigma_{J_-\cup J_+}}/N_{\sigma_{J_-}},
$$
and denote the projection $N_{\sigma_{J_-\cup J_+}}\to N_\cR$, and its $\R$-linear extension, by $u\mapsto \overline{u}$. We finally introduce the fan
$$
\Sigma_\cR:=\lbrace \overline{\sigma}_J\:\vert\: J \subsetneq J_+ \rbrace,
$$
and the associated toric variety $X_\cR$.
\begin{remark}
 We keep the notation $X_\cR$ to be consistent with \cite{CLS}, where the $\cR$ stands for an {\it extremal ray} being responsible for the flip in the context of toric MMP.
\end{remark}
 For the following, see \cite[Lemma 15.4.2 and Proposition 15.4.5.(c)]{CLS}.
\begin{proposition}
 \label{prop:fibersflip}
 The fibers of $\phi_-$ are isomorphic to the $\Q$-Fano toric variety $X_\cR$. Moreover, $X_\cR$ has dimension $\vert J_+\vert-1$ and Picard rank one.
\end{proposition}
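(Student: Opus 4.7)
The plan is to identify the fan of a typical fiber of $\phi_-$ via the standard description of fibers of toric morphisms, and then to read off the dimension, the Picard rank, and the Fano property directly from that fan.

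First, I would observe that $\phi_-$ is the toric morphism induced by the surjective lattice map $N/N_{\sigma_{J_-}} \to N/N_{\sigma_{J_-\cup J_+}}$, whose kernel is exactly $N_\cR$. By the orbit-cone correspondence together with the general theory of fibers of toric morphisms, the fiber over the distinguished point of the closed orbit in $V(\sigma_{J_-\cup J_+})$ is a toric variety with lattice $N_\cR$, whose fan consists of the images in $(N_\cR)_\R$ of the cones of the source fan $\mathrm{Star}(\sigma_{J_-})$ that lie over $\sigma_{J_-\cup J_+}$. Concretely, the source fan is the restriction of $\Sigma_-$ to cones containing $\sigma_{J_-}$, i.e., cones $\sigma_J$ with $J_-\subseteq J$ and $J_+\not\subseteq J$. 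The condition that such a cone maps into $\sigma_{J_-\cup J_+}$ becomes $\sigma_J\subseteq \sigma_{J_-\cup J_+}$, hence $J\subseteq J_-\cup J_+$. Together with $J_-\subseteq J$ and $J_+\not\subseteq J$, this gives $J=J_-\cup J'$ with $J'\subsetneq J_+$. Projecting to $N_\cR$ collapses $\sigma_{J_-}$ to zero, so the image cones are $\overline{\sigma}_{J'}$ for $J'\subsetneq J_+$, which is precisely $\Sigma_\cR$.

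Next I would compute the numerical invariants from this description. Because the unique linear relation among $\nu_1,\ldots,\nu_{n+1}$ is $\sum b_i\nu_i=0$ and it involves at least one index in $J_+$, the $|J_-|$ vectors $\{\nu_i\}_{i\in J_-}$ are linearly independent. Hence $N_{\sigma_{J_-}}$ has rank $|J_-|$, while $N_{\sigma_{J_-\cup J_+}}$ has rank $|J_-|+|J_+|-1$, and the short exact sequence of lattices yields $\mathrm{rank}(N_\cR)=|J_+|-1$, which is the dimension of $X_\cR$. The rays of $\Sigma_\cR$ are the $|J_+|$ classes $\overline{\nu_i}$ for $i\in J_+$; they span $N_\cR\otimes\R$ and satisfy a single relation (the image of $\sum_{i\in J_+}b_i\nu_i = -\sum_{i\in J_-}b_i\nu_i \in N_{\sigma_{J_-}}$), so from the divisor class sequence one gets $\mathrm{rank}\,\mathrm{Pic}(X_\cR)=|J_+|-(|J_+|-1)=1$.

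Finally, for the $\Q$-Fano property I would use that $\Sigma_\cR$ is complete and simplicial (inherited from $\Sigma_-$), so $X_\cR$ is a $\Q$-factorial complete toric variety of Picard rank one, in which every $\Q$-Cartier torus-invariant divisor is numerically proportional to a generator of $N^1(X_\cR)_\Q$. The anticanonical divisor $-K_{X_\cR}=\sum_{i\in J_+}D_{\overline{\nu_i}}$ is effective with all coefficients equal to $1>0$, hence lies in the interior of the pseudoeffective cone; combined with Picard rank one this forces $-K_{X_\cR}$ to be ample, so $X_\cR$ is $\Q$-Fano. To conclude that \emph{every} fiber is isomorphic to $X_\cR$ (not merely the distinguished one), I would invoke torus-equivariance of $\phi_-$ together with the fact that the fan description above gives a fiber of constant dimension $|J_+|-1=\dim V(\sigma_{J_-})-\dim V(\sigma_{J_-\cup J_+})$, so $\phi_-$ is equidimensional and the fiber over any point is $X_\cR$.

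The main obstacle is the fan-theoretic computation of the fiber: one must carefully track the double quotient $N\to N/N_{\sigma_{J_-}}\to N/N_{\sigma_{J_-\cup J_+}}$ and verify that the cones $\sigma_J$ satisfying the three conditions ($J_-\subseteq J$, $J_+\not\subseteq J$, $J\subseteq J_-\cup J_+$) correspond bijectively, after projection to $N_\cR$, with the cones of $\Sigma_\cR$. Once this identification is secured the remaining arguments — rank count, single linear relation, Picard rank one, ampleness of the anticanonical — are largely formal.
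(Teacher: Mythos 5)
The paper offers no proof of this proposition: it simply cites \cite[Lemma 15.4.2 and Proposition 15.4.5.(c)]{CLS}. Your reconstruction of the fiber over a point of the dense orbit of $V(\sigma_{J_-\cup J_+})$ (the cones $\sigma_J$ with $J_-\subseteq J$, $J_+\not\subseteq J$, $J\subseteq J_-\cup J_+$, projecting to $\Sigma_\cR$), the rank count giving $\dim X_\cR=|J_+|-1$, the computation $\rk\Pic(X_\cR)=|J_+|-(|J_+|-1)=1$ from the divisor class sequence, and the deduction that $-K_{X_\cR}$ is ample from effectivity together with Picard rank one on a complete simplicial (hence projective, being a fake weighted projective space) toric variety, are all correct and in line with the argument in \cite{CLS}.

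The genuine gap is in your last step: equivariance plus equidimensionality does not yield that \emph{every} fiber of $\phi_-$ is isomorphic to $X_\cR$. Equivariance only reduces the problem to one fiber per orbit of $V(\sigma_{J_-\cup J_+})$, whose fan $\mathrm{Star}(\sigma_{J_-\cup J_+})$ has cones of all dimensions up to $|J_0|$, so the fibers over the lower-dimensional strata still need to be identified; and equidimensionality is perfectly compatible with jumping fibers (for instance $\Bl_{pt}(\P^1\times\P^1)\to\P^1$ is an equidimensional toric morphism whose special fiber is a chain of two rational curves, not $\P^1$). What actually closes the argument is that $\mathrm{Star}(\sigma_{J_-})$ is a \emph{splitting fan} over $\mathrm{Star}(\sigma_{J_-\cup J_+})$: writing $J=J_-\sqcup J'\sqcup J''$ with $J'\subsetneq J_+$ and $J''\subseteq J_0$, the image of $\sigma_J$ in $N/N_{\sigma_{J_-}}$ is the sum of the image of $\sigma_{J''}$, which maps bijectively onto the corresponding cone of the base (the $\nu_i$, $i\in J_0$, being independent modulo $\mathrm{Span}(\sigma_{J_-\cup J_+})$), and of the image of $\sigma_{J'}$, which is a cone of $\Sigma_\cR$ inside $(N_\cR)_\R$. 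By \cite[Theorem 3.3.19]{CLS} this exhibits $\phi_-$ as a Zariski-locally trivial fibration with fiber $X_\cR$, which is what gives the statement for all fibers. This is also the part that matters downstream, since the relative ampleness in Proposition~\ref{prop:phiampledivisors} is tested on every fiber, not only on generic ones.
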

From the above, we deduce that the anticanonical divisor of $X_\cR$
$$
-K_\cR=\sum_{\rho\in \Sigma_\cR(1)} D_\rho
$$
is $\Q$-Cartier and ample. Note that by construction it can be written
$$
-K_\cR=\sum_{i\in J_+} D_{\overline{\R_+\cdot \nu_i}}=\sum_{i\in J_+} D_{\overline{\rho}_i}
$$
where we set 
$$\rho_i=\R_+\cdot \nu_i,$$
and $D_{\rho}$ stands for the torus-invariant divisor associated to the ray $\rho$. An easy exercise, using the orbit-cone correspondence, together with \cite[Proposition 15.5.1]{CLS}, shows that 
\begin{proposition}
 \label{prop:phiampledivisors}
 The $\Q$-Cartier torus invariant divisor 
 \begin{equation}
  \label{eq:phiampledivisorI}
  -D_{J_+}:= -\sum_{i\in J_+} D_{\rho_i} \in \mathrm{Div}(X)
 \end{equation}
is $\phi$-ample, while the $\Q$-Cartier invariant divisor
 \begin{equation}
  \label{eq:phiampledivisorII}
  D_{J_+}':=\psi_*(D_{J_+})= \sum_{i\in J_+} D_{\rho_i}' \in \mathrm{Div}(X')
 \end{equation}
is $\phi'$-ample.
\end{proposition}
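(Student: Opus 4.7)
The plan is to apply the relative toric Kleiman--Nakai criterion \cite[Proposition 15.5.1]{CLS}, which on a simplicial toric variety reduces $\phi$-ampleness of a $\Q$-Cartier torus-invariant divisor to the positivity of its intersection with each wall curve $V(\tau)$ contracted by $\phi$. Accordingly I would (i) identify the contracted walls combinatorially, (ii) recognise their wall relations as multiples of the flipping relation from Definition \ref{def:flippingcone}, and (iii) compute the intersection numbers and read off their signs.

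By Lemma \ref{lem:localtoricflip}, $V(\tau)$ is contracted by $\phi$ iff $V(\tau) \subset V(\sigma_{J_+})$, equivalently $\sigma_{J_+} \preceq \tau$. Writing $\tau = \sigma_K$ with $|K|=n-1$, this forces $J_+ \subseteq K$. Setting $\{i,j\} := \{1,\ldots,n+1\} \setminus K$, one checks directly from $\Sigma_+ = \{\sigma_J : J_- \nsubseteq J\}$, together with $J_+ \cap J_- = \emptyset$, that both adjacent maximal cones $\sigma_{K \cup \{i\}}$ and $\sigma_{K \cup \{j\}}$ lie in $\Sigma_+$ precisely when $\{i,j\} \subseteq J_-$.

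For such a $\tau$, the rays $\{\nu_l : l \in K \cup \{i,j\}\} = \{\nu_1,\ldots,\nu_{n+1}\}$ admit a unique (up to scalar) linear relation, namely the flipping relation $\sum_l b_l \nu_l = 0$. Normalising the wall relation so that the two ``outer'' coefficients on $\nu_i,\nu_j$ are positive, as in the convention of \cite[Proposition 6.4.4]{CLS}, requires a global sign flip since $b_i, b_j < 0$; the wall relation therefore reads $\sum_l (-b_l)\nu_l = 0$, and \cite[Proposition 6.4.4]{CLS} gives $D_{\rho_l}\cdot V(\tau) = \lambda_\tau \cdot (-b_l)$ for a positive constant $\lambda_\tau$ depending only on $\tau$. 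Summing over $l \in J_+$,
\[
-D_{J_+}\cdot V(\tau) \;=\; \lambda_\tau \cdot \sum_{l \in J_+} b_l \;>\; 0,
\]
since $b_l > 0$ for every $l \in J_+$. As this holds for every contracted wall, the criterion yields $\phi$-ampleness of $-D_{J_+}$. The assertion for $D_{J_+}'$ follows by the mirror argument with $J_+$ and $J_-$ exchanged: contracted walls of $\Sigma_-$ correspond to pairs $\{i,j\} \subseteq J_+$, so $b_i, b_j > 0$ and no sign flip is required; $D'_{\rho_l}\cdot V(\tau')$ becomes a positive multiple of $b_l$, and summing over $l \in J_+$ again yields positivity.

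The delicate step is the sign bookkeeping in the normalisation of the wall relations; the positivity ultimately rests on the fact that the two outer rays of a contracted wall lie in $J_-$ on the side of $\phi$ and in $J_+$ on the side of $\phi'$, which is exactly what accounts for the opposite sign in front of $D_{J_+}$ versus $D_{J_+}'$ in the statement.
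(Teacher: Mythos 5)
Your proof is correct and fills in exactly the ``easy exercise'' the paper leaves to the reader: its own justification is nothing more than the orbit--cone correspondence plus the citation of \cite[Proposition 15.5.1]{CLS}, i.e.\ precisely the relative Kleiman-type test on contracted wall curves, identified via the wall relation $\sum_l b_l\nu_l=0$, that you carry out. One small imprecision worth noting: when $J_0\neq\emptyset$ the equivalence ``$V(\tau)$ contracted $\Leftrightarrow\sigma_{J_+}\preceq\tau$'' fails, since $V(\sigma_{J_+})$ dominates $V(\sigma_{J_+\cup J_-})$, which has dimension $|J_0|$, so some invariant curves in the exceptional locus are not contracted; however, the walls you actually test --- those whose complementary pair $\{i,j\}$ lies in $J_-$, i.e.\ the interior walls of the subdivision $\Sigma_+$ of $\sigma_0$ --- are exactly the contracted ones, so the argument goes through unchanged.
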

\begin{remark}
\label{rem:notationprime}
In the above proposition, we use a superscript to $D'$ to indicate that the torus invariant divisor $D'$ is taken as the orbit closure of some ray {\it in $X'$}. As $\psi : X \dashrightarrow X'$ is an isomorphism in codimension $2$, it induces an isomorphism $\psi_*$ between the groups of torus invariant $\Q$-Cartier divisors on $X$ and $X'$, which can be written on basis elements $D_\rho \mapsto D_\rho'$ for any $\rho\in \Sigma(1)=\Sigma'(1)$. Then, to ease notation later on, we will omit the superscript to $D_\rho'$, the context being clear enough whether $D_\rho$ is considered as a divisor on $X$ or $X'$. We will also simply denote $D_{J_+}$ and $D_{J_+}'$ by $$D_+=\sum_{i\in J_+} D_{\rho_i},$$
so that the conclusion of Proposition \ref{prop:phiampledivisors} is that $-D_+$ is $\phi$-ample on $X$ and $D_+$ is $\phi'$-ample on $X'$. 

Finally, we note that we could have considered the divisor 
$$
D_-=\sum_{i\in J_-} D_{\rho_i},
$$
which is $\phi$-ample (while $-D_-$ is $\phi'$-ample).  However, the {\it wall relation}
$$
\sum_{i\in J_-} b_i \nu_i + \sum_{i\in J_+} b_i \nu_i = 0
$$
from Definition \ref{def:flippingcone} implies that from the intersection theory point of view, computing slopes with $D_+$ or $D_-$ will produce the same results regarding stability notions (see e.g. \cite[Section 6.4, Proposition 6.4.4]{CLS}). 
\end{remark}

\section{The flip functor}
\label{sec:flipfunctor}
\subsection{Equivariant sheaves and Klyachko's equivalence}
\label{sec:equivsheaves}
We now turn  to the description of the flip functor, which requires first introducing the categories of torus equivariant reflexive sheaves. For a given toric variety $X$ with fan $\Sigma$, a {\it torus equivariant reflexive sheaf} is a reflexive sheaf $\cE$ on $X$ together with an isomorphism
$$\varphi : \alpha^*\cE \to \pi_2^*\cE$$
satisfying certain cocyle conditions,
where $\alpha : T \times X \to X$ and 
$\pi_2 : T \times X \to X$  stand for the torus action and the projection on $X$ respectively (see for example \cite[Section 5]{Per04}). 
\begin{definition}
 A {\it toric sheaf} is a torus equivariant reflexive sheaf.
\end{definition}
Klyachko has shown  (see \cite{Kly90} for locally free sheaves and \cite{Per04} in general) that any toric sheaf is uniquely described by a {\it family of filtrations}, denoted  $$(E,E^\rho(i))_{\rho\in\Sigma(1),i\in\Z}.$$
Here, $E$ stands for a finite dimensional complex vector space of dimension $\rank(\cE)$, and for each ray $\rho\in\Sigma(1)$, $(E^\rho(i))_{i\in\Z}$ is a bounded {\it increasing} filtration of $E$ (we will use increasing filtrations as in \cite{Per04}, rather than decreasing ones as in \cite{Kly90}). Then, the equivariant reflexive sheaf $\cE$ is recovered from the formula, for $\sigma\in\Sigma$ :
\begin{equation*}
  \label{eq:sheaffromfiltrations}
  \Gamma(U_{\sigma}, \cE):=\bigoplus_{m\in M} \bigcap_{\rho\in\sigma(1)} E^\rho(\langle m,u_\rho\rangle)\otimes \chi^m
 \end{equation*}
 where $u_\rho\in N$ is the primitive generator of $\rho$, $\langle \cdot,\cdot\rangle$ the duality pairing and $\chi^m$ the weight $m$ character function.

\begin{remark}
If $(E, E^\rho(i))$ and $(F, F^\rho(i))$ denote respectively the families of increasing and
decreasing filtrations of a toric sheaf $\cE$, then there are related by the formula
$$
F^\rho(i) = E^\rho(-i).
$$
In our study of stability, nothing changes in the choice of increasing or decreasing
filtrations.
\end{remark}
 
 A morphism between two families of filtrations 
 $$ b : (E_1,E_1^\rho(i))_{\rho\in\Sigma(1),i\in\Z}\to(E_2,E_2^\rho(i))_{\rho\in\Sigma(1),i\in\Z}$$
 is a linear map $b : E_1 \to E_2$ that satisfies $$b(E_1^\rho(i))\subset E_2^\rho(i)$$ for any $\rho\in\Sigma(1)$ and $i\in\Z$. Any such morphism corresponds uniquely to a morphism between the associated reflexive sheaves, and a fundamental result of Klyachko and Perling (\cite{Kly90,Per04}) asserts that the categories of families of filtrations and of toric sheaves are equivalent. For our purposes, it seems more natural to slightly restrict the definition of morphisms. We will consider morphisms between toric sheaves $\cE_i$ to be equivariant morphisms of coherent sheaves $\beta : \cE_1 \to \cE_2$ that satisfy that $\mathrm{Im}(\beta)$ is a saturated reflexive subsheaf of $\cE_2$. Those morphisms correspond through Klyachko's equivalence to linear maps $b :E_1 \to E_2$ such that 
 $$b(E_1^\rho(i))=b(E_1)\cap E_2^\rho(i)$$ 
 for any $(\rho,i)$, as can be seen via \cite[Lemma 2.15]{NapTip} for example. We denote by $\Ref^T(X)$ on one hand and by $\Filt(X)$ on the other, the categories of toric sheaves and of families of filtrations,  endowed with those classes of morphisms. We will denote by 
 $$
 \Kl : \Filt(X) \to  \Ref^T(X)
 $$
 Klyachko's functor as described above. Then, Klyachko and  Perling's work readily implies the following :
 \begin{theorem}[\cite{Kly90,Per04}]
  The functor $\Kl$ is an equivalence of categories.
 \end{theorem}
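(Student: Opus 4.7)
The plan is to deduce this from the standard Klyachko--Perling equivalence by a careful check on morphisms, since the only thing non-classical in the statement is that we have replaced arbitrary equivariant morphisms by morphisms with saturated reflexive image on one side, and filtration-preserving linear maps by strict maps on the other.

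First, I would invoke the full version of the equivalence already proved in \cite{Kly90, Per04}: if one enlarges our categories by taking \emph{all} equivariant sheaf morphisms on the $\Ref^T(X)$-side, and all linear maps $b : E_1 \to E_2$ with $b(E_1^\rho(i))\subset E_2^\rho(i)$ on the filtration side, then the functor defined by the formula
\begin{equation*}
\Gamma(U_\sigma,\cE)=\bigoplus_{m\in M}\bigcap_{\rho\in\sigma(1)} E^\rho(\langle m,u_\rho\rangle)\otimes \chi^m
\end{equation*}
is an equivalence. In particular, essential surjectivity of $\Kl$ follows immediately from essential surjectivity of this enlarged functor, since our restriction on morphisms does not affect the class of objects.

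It then remains to check that $\Kl$ is fully faithful for the chosen restricted morphism classes, i.e.\ that under the bijection between equivariant morphisms $\beta:\cE_1\to\cE_2$ and filtration-preserving linear maps $b:E_1\to E_2$, the condition that $\mathrm{Im}(\beta)$ be a saturated reflexive subsheaf of $\cE_2$ corresponds exactly to the strictness condition $b(E_1^\rho(i))=b(E_1)\cap E_2^\rho(i)$ for all $(\rho,i)$. The image subsheaf $\mathrm{Im}(\beta)\subset \cE_2$ corresponds to the family of filtrations $(b(E_1),b(E_1^\rho(i)))$, while the saturation of $\mathrm{Im}(\beta)$ inside the reflexive sheaf $\cE_2$ corresponds to the family $(b(E_1), b(E_1)\cap E_2^\rho(i))$. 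Thus $\mathrm{Im}(\beta)$ is saturated (and then automatically reflexive since we work on a normal variety and saturated subsheaves of a reflexive sheaf are reflexive in codimension one, combined with the equivariant local structure) if and only if $b$ is strict in the sense above. This identification of saturation in Klyachko's language is the content of \cite[Lemma 2.15]{NapTip}, so I would simply cite it.

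The main (mild) obstacle is precisely the identification of the saturation functor with the operation $b(E_1^\rho(i))\mapsto b(E_1)\cap E_2^\rho(i)$. Once this is granted from \cite{NapTip}, the verification that $\Kl$ is full (any strict $b$ comes from some $\beta$ with saturated image), faithful (distinct $\beta$'s produce distinct $b$'s, inherited from the full Klyachko--Perling equivalence), and essentially surjective is automatic. No further computation is needed.
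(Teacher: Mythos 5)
Your proposal matches the paper's treatment: the paper likewise derives the statement directly from the classical Klyachko--Perling equivalence for the unrestricted categories, and handles the only new point --- that equivariant morphisms with saturated reflexive image correspond exactly to linear maps satisfying $b(E_1^\rho(i))=b(E_1)\cap E_2^\rho(i)$ --- by citing \cite[Lemma 2.15]{NapTip}, exactly as you do. No gap; this is essentially the same argument.
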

 
 \begin{remark}
  A nice feature of the categories $\Filt(X)$ and $\Ref^T(X)$ is that they are abelian. This is no longer true when we consider all morphisms of reflexive sheaves, as for example the quotient of $\cO_{\P^1}$ by the subsheaf $\cO_{\P^1}(1)$ is torsion, and hence not reflexive.
 \end{remark}

 \subsection{Flip functor}
 \label{sec:functordefinition}
 Assume now that $\phi' : X' \to X_0$ is the flip of $\phi : X \to X_0$ as in the previous section. As $\phi$ and $\phi'$  are isomorphisms in codimension $2$, we have $$\Sigma(1)=\Sigma_0(1)=\Sigma'(1).$$ We deduce that there are equivariant injections $i$ (resp. $i_0$ and $i'$) of the $T$-invariant Zariski open set 
 $$
 U:=\bigcup_{\tau\in\Sigma(0)\cup\Sigma(1)} U_\tau
 $$
 into $X$ (resp. $X_0$ and $X'$). Then, from \cite[Proposition 1.6]{Har80}, we deduce that for any toric sheaf $\cE\in \mathrm{Obj}(\Ref^T(X))$ (resp. $\cF\in \mathrm{Obj}(\Ref^T(X_0))$ and $\cG\in \mathrm{Obj}(\Ref^T(X'))$), we have 
 $$
 i_*(\cE_{\vert U})\simeq \cE
 $$
 (resp.  $(i_0)_*(\cF_{\vert U})\simeq \cF$ and $i'_*(\cG_{\vert U})\simeq \cG$). As reflexive sheaves are normal, meaning their sections extend over codimenion-$2$ Zariski-closed subsets, we have :
 \begin{proposition}
  \label{prop:equivalentcategoriesflip}
  The pushforward $i_*$ (resp. $i'_*$ and $(i_0)_*$) induces an equivalence of categories 
  $$
  i_* : \Ref^T(U)\to \Ref^T(X)
  $$
  (resp. $\Ref^T(U)\simeq \Ref^T(X_0)$ and $\Ref^T(U)\simeq \Ref^T(X')$). Hence, we have equivalences 
  $$
  \Ref^T(X)\simeq \Ref^T(X_0)\simeq \Ref^T(X').
  $$
 \end{proposition}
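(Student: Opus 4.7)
The plan is to exhibit restriction to $U$ as a quasi-inverse to $i_*$ (and to $i_0'$, $(i_0)_*$), and then compose the resulting three equivalences. The key geometric input is that $U$ contains every orbit of dimension $n$ or $n-1$, so its complement in each of $X$, $X_0$, $X'$ consists of orbit closures $V(\sigma)$ with $\dim \sigma \geq 2$, hence has codimension at least $2$. Restriction $\cE \mapsto \cE_{\vert U}$ is clearly well-defined from $\Ref^T(X)$ to $\Ref^T(U)$: it preserves reflexivity because reflexivity is local, and it preserves the $T$-equivariant structure because $U$ is $T$-invariant.

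The main verification is that $i_*$ lands in $\Ref^T(X)$, that is, $i_*\cF$ is reflexive on $X$ for every $\cF\in\mathrm{Obj}(\Ref^T(U))$. One argues in the usual way: reflexive sheaves on a normal variety are exactly the torsion-free $S_2$ sheaves, and since $\mathrm{codim}(X\setminus U)\geq 2$, pushforward along $i$ preserves both properties (sections of $i_*\cF$ on any affine open $V\subset X$ are sections of $\cF$ on $V\cap U$, which extend over the codimension-$2$ locus $V\setminus U$ because $\cF$ is $S_2$ on $U$). The $T$-equivariant structure on $\cF$ transports via $i_*$ to one on $i_*\cF$ because $i$ is $T$-equivariant. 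The unit $\cE \to i_*(\cE_{\vert U})$ is an isomorphism by the normality property of reflexive sheaves invoked just before the proposition (this is \cite[Proposition 1.6]{Har80}), and the counit $(i_*\cF)_{\vert U}\to \cF$ is the identity; both are natural and equivariant, so $i_*$ is an equivalence.

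It remains to check that the restricted class of morphisms (equivariant maps $\beta$ whose image is a saturated reflexive subsheaf) is preserved in both directions. Via Klyachko's equivalence, such $\beta$ correspond to linear maps $b : E_1 \to E_2$ satisfying $b(E_1^\rho(i)) = b(E_1)\cap E_2^\rho(i)$ for every ray $\rho$. Since $\Sigma(1) = \Sigma_0(1)=\Sigma'(1)$, and since $\cE_{\vert U}$ encodes all of the filtration data of $\cE$ (the rays of $\Sigma(1)$ are exactly the cones of $U$), this condition is intrinsic to $\Ref^T(U)$ and is preserved by both $i_*$ and restriction. The same argument applies verbatim to $i_0$ and $i'$, producing the three equivalences $\Ref^T(X)\simeq \Ref^T(U)\simeq \Ref^T(X_0)$ and $\Ref^T(U)\simeq\Ref^T(X')$, which compose to the asserted chain.

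The main obstacle is the first point of the previous paragraph, namely checking that $i_*$ actually lands in reflexive sheaves; everything else is formal once the codimension-$2$ complement is identified. Because the authors have already cited \cite[Proposition 1.6]{Har80} and emphasized the ``normality'' property of reflexive sheaves, this step reduces to invoking the $S_2$ characterization, and the proof can be kept very short.
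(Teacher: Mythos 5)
Your argument is correct and follows essentially the same route as the paper: the paper justifies the proposition by the preceding observation that $\Sigma(1)=\Sigma_0(1)=\Sigma'(1)$, so $U$ has complement of codimension at least $2$ in each of $X$, $X_0$, $X'$, and then invokes \cite[Proposition 1.6]{Har80} together with the normality of reflexive sheaves, which is exactly the content of your $S_2$/extension argument. Your additional verification that the restricted class of morphisms is preserved is a reasonable elaboration of the paper's remark that the categories of families of filtrations on $U$, $X$, $X_0$ and $X'$ coincide.
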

 It is straightforward to check that the equivalence
 $$
 \Ref^T(X)\simeq \Ref^T(X_0)
 $$
 is induced by the pushforward $\phi_*$ while the equivalence 
 $$
 \Ref^T(X')\simeq \Ref^T(X_0)
 $$
 is induced by $\phi_*'$. Moreover, the categories of families of filtrations on $U$, $X$, $X'$ and $X_0$ are readily the same, and the above equivalence of categories simply correspond to the self-equivalence of $\Filt(X)$ induced by the identity on objects and morphisms.
 \begin{definition}
  \label{def:flipfunctor}
  We define the {\it flip functor} 
  $$
  \psi_* :  \Ref^T(X) \to \Ref^T(X')
  $$
  to be the composition of functors induced by $(\phi'_*)^{-1}$ and $\phi_*$.
 \end{definition}
We conclude this section by noting that the flip functor sends the tangent sheaf $\cT_X$ of $X$ to the tangent sheaf $\cT_{X'}$ of $X'$:
\begin{lemma}
 \label{lem:tangentpreserved}
 We have $$\phi_* \cT_X= \cT_{X_0}=\phi'_*\cT_{X'}.$$
\end{lemma}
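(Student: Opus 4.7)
The plan is to reduce the identity to the fact that $X$, $X_0$ and $X'$ share a common equivariant open subset of codimension-two complement, on which all three tangent sheaves agree, and then to invoke Proposition \ref{prop:equivalentcategoriesflip}.

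First I would check that $\cT_X$, $\cT_{X_0}$ and $\cT_{X'}$ are bona fide objects of $\Ref^T(X)$, $\Ref^T(X_0)$ and $\Ref^T(X')$ respectively. Equivariance is immediate from the torus action on each variety, and reflexivity follows from the fact that on any normal variety the tangent sheaf can be written as the dual $\mathcal{H}om(\Omega^1_\bullet,\cO_\bullet)$ of the sheaf of K\"ahler differentials, and duals of coherent sheaves are automatically reflexive.

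The key step is then to compare the restrictions of the three tangent sheaves to the common open subset
\[ U = \bigcup_{\tau\in\Sigma(0)\cup\Sigma(1)} U_\tau \]
introduced in Section \ref{sec:functordefinition}. Because $\Sigma$ and $\Sigma'$ are refinements of $\Sigma_0$ that differ from $\Sigma_0$ only inside the flipping cone $\sigma_0$, and since $\Sigma(0)\cup\Sigma(1)$, $\Sigma_0(0)\cup\Sigma_0(1)$ and $\Sigma'(0)\cup\Sigma'(1)$ are identical as subfans, the toric morphisms $\phi$ and $\phi'$ restrict to equivariant isomorphisms between the corresponding copies of $U$ in $X$, $X_0$ and $X'$. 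Naturality of the tangent sheaf under open immersions/isomorphisms then yields canonical identifications
\[ \cT_X|_U \;\simeq\; \cT_U \;\simeq\; \cT_{X_0}|_U \;\simeq\; \cT_{X'}|_U. \]

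To conclude, I would apply Proposition \ref{prop:equivalentcategoriesflip}: under the equivalence $\Ref^T(X)\simeq\Ref^T(X_0)$, the functor $\phi_*$ sends a toric sheaf $\cE$ to the unique (equivariant) reflexive extension of $\cE|_U$ to $X_0$. Taking $\cE=\cT_X$ and observing that $\cT_{X_0}$ is itself such an extension of $\cT_U$ forces $\phi_*\cT_X = \cT_{X_0}$, and the identical argument applied to $\phi'_*$ gives $\phi'_*\cT_{X'} = \cT_{X_0}$. No substantial obstacle is anticipated; the only mildly technical point is verifying reflexivity of the tangent sheaf on the possibly non-simplicial normal variety $X_0$, which is a standard consequence of the dual-sheaf description.
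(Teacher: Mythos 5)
Your argument is correct and coincides with the paper's own proof: both reduce the identity to the normality of $X$, $X_0$, $X'$ and the reflexivity of their tangent sheaves, which forces each sheaf to be the unique reflexive extension of its restriction to a common open set with codimension-two complement (via \cite[Proposition 1.6]{Har80}, i.e.\ Proposition \ref{prop:equivalentcategoriesflip}). The extra details you supply (reflexivity of $\cT_{X_0}$ as a dual sheaf, the explicit identification over $U$) are just an expansion of the same one-line argument.
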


\begin{proof}
 It follows from the facts that $X$, $X_0$ and $X'$ are normal, and their tangent sheaves reflexive. Hence, the sheaves are determined by their restriction to the complement of the exceptional loci of $\phi$ and $\phi'$ which have codimension greater or equal to $2$ (see \cite[Proposition 1.6]{Har80}).
\end{proof}

 \subsection{The logarithmic subcategories}
 \label{sec:logcase}
 In Section \ref{sec:secondtheorem}, we will be interested in specific subcategories of $\Ref^T(X)$ and $\Ref^T(X')$. For any $\Delta\subset \Sigma(1)$, we introduce the torus invariant divisor
 $$
 D_\Delta:=\sum_{\rho\in \Delta} D_\rho
 $$
 and the full subcategory $\Ref^T(X,D_\Delta)$ of $\Ref^T(X)$ whose objects are the toric sheaves on $X$ whose associated families of filtrations $(E,E^\rho(i))_{\rho\in\Sigma(1), i\in\Z}$ satisfy for all $\rho\in\Delta$, there is $a_\rho \in \Z$ such that:
 \begin{equation}
  \label{eq:definingconditionlogcategory}
  E^\rho(i)=\left\{ 
  \begin{array}{ccc}
   0 & \mathrm{ if } & i< a_\rho \\
   E & \mathrm{ if } & i\geq a_\rho .
  \end{array}
  \right.
 \end{equation}

\begin{remark}
From \cite[Theorem 1.1]{Nap-logtangent}, the logarithmic tangent sheaf $\cT_X(-\log D_\Delta)$ belongs to $\mathrm{Obj}(\Ref^T(X,D_\Delta))$, which justifies our choice of terminology.
\end{remark}
It is then straightforward to see that the flip functor $\psi_*$ induces an equivalence between $\Ref^T(X,D_\Delta)$ and $\Ref^T(X',D_\Delta')$, where we use $D_\Delta'$ to denote $\sum_{\rho\in\Delta} D_\rho'$. Note also that $\psi_*$ sends $\cT_X(-\log D_\Delta)$ to $\cT_{X'}(-\log D_\Delta')$.

\section{Flips and stability for a given sheaf}
\label{sec:specificfamilies}

\subsection{Slope stability of toric sheaves}
\label{sec:slopestability}
Let $(X,L)$ be a polarised complex variety. Recall that a reflexive sheaf $\cE$ on $(X,L)$ is said to be {\it slope stable} (resp. {\it slope semistable}), or simply stable (resp. semistable) for short, if for any coherent and saturated subsheaf $\cF\subset \cE$ of strictly smaller rank, one has
$$
\mu_L(\cF)<\mu_L(\cE),
$$
(resp. $\mu_L(\cF)\leq \mu_L(\cE)$),
where for any coherent torsion-free sheaf $\cF$, the {\it slope} $\mu_L(\cF)$ is defined by
$$
\mu_L(\cF)=\frac{c_1(\cF)\cdot L^{n-1}}{\rank(\cF)}\in\Q.
$$
A {\it polystable sheaf} is a direct sum of stable ones with the same slope. A sheaf will be called {\it unstable} if it is {\it not semistable}.
\begin{remark}
When referring to a specific polarisation $L$ used to define stability notions, we will use the terminology $L$-stable (resp. $L$-unstable, $L$-semistable, etc).
\end{remark}

A remarkable fact, proved by Kool (\cite[Proposition 4.13]{Koo11}), is that if we assume $X$ and $\cE$ to be toric, to check stability for $\cE$, it is enough to compare slopes with equivariant and saturated reflexive subsheaves, that is sub objects of $\cE$ in $\Ref^T(X)$ (note that this was proved in the smooth case by Kool, but it was noted in \cite{CT22} that the proof extends in the normal case). If  $(E,E^\rho(\bullet))_{\rho\in\Sigma(1)}$ stands for the family of filtrations of $\cE$, any saturated equivariant reflexive subsheaf of $\cE$ is associated to a family of filtrations of the form $(F, F\cap E^\rho(i))_{\rho\in\Sigma(1),i\in\Z}$ for some vector subspace $F\subsetneq E$ (see \cite[Lemma 2.15]{NapTip}). Moreover, Klyachko's formula for the slope of a toric sheaf (\cite[Corollary 2.18]{CT22}) is 
\begin{equation}
 \label{eq:slope}
 \mu_L(\cE)=-\frac{1}{\rank(\cE)}\sum_{\rho\in\Sigma(1)} \iota_\rho(\cE)\, \deg_L(D_\rho),
\end{equation}
 where $\deg_L(D_\rho)$ is the degree with respect to $L$, and
 $$
 \iota_\rho(\cE):=\sum_{i\in\Z} i \left(\dim(E^\rho(i))-\dim (E^\rho(i-1))\right).
 $$
 To sum up, we have
\begin{proposition}
\label{prop:stabilityequivariantcase}
The toric sheaf associated to $(E,E^\rho(i))_{\rho\in\Sigma(1), i\in\Z}$ is stable  if and only if for any subspace $F\subsetneq E$, we have 
$$
\frac{1}{\dim(F)}\sum_{\rho\in\Sigma(1)} \iota_\rho(F)\, \deg_L(D_\rho) > \frac{1}{\dim(E)}\sum_{\rho\in\Sigma(1)} \iota_\rho(\cE)\, \deg_L(D_\rho),
$$
where 
$$
 \iota_\rho(F):=\displaystyle\sum_{i\in\Z} i \left(\dim(F\cap E^\rho(i))-\dim (F\cap E^\rho(i-1))\right).
$$
The similar statement holds for semistability by replacing the strict inequality by a weak inequality.
 \end{proposition}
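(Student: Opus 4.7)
The statement is essentially an assembly of three results already cited in the text: Kool's reduction of stability testing to equivariant subsheaves, the classification of equivariant saturated reflexive subsheaves via families of filtrations, and Klyachko's formula for the slope. The plan is thus to chain these three inputs.

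First, I would invoke Kool's result \cite[Proposition 4.13]{Koo11} (extended to the normal toric case in \cite{CT22}) to reduce the stability check for $\cE$ to the comparison of $\mu_L(\cF)$ with $\mu_L(\cE)$ only for coherent equivariant and saturated reflexive subsheaves $\cF \subsetneq \cE$ of strictly smaller rank. This is the crucial input that makes the statement meaningful: a priori stability requires comparison against all coherent saturated subsheaves, but in the toric setting the equivariant ones suffice.

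Next, I would use Klyachko's equivalence together with \cite[Lemma 2.15]{NapTip} to describe the equivariant saturated reflexive subsheaves in terms of the family of filtrations. Concretely, any such subsheaf corresponds to a family of the form $(F, F \cap E^\rho(i))_{\rho \in \Sigma(1), i \in \Z}$ for some vector subspace $F \subsetneq E$. The rank of the associated subsheaf is $\dim F$, and the condition that $\cF$ has strictly smaller rank than $\cE$ translates precisely to $F \subsetneq E$.

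Then, I would apply the slope formula \eqref{eq:slope} to both $\cE$ and the subsheaf $\cF$ corresponding to $F$. For $\cE$, the formula reads
\[
\mu_L(\cE) = -\frac{1}{\dim(E)} \sum_{\rho \in \Sigma(1)} \iota_\rho(\cE) \, \deg_L(D_\rho),
\]
while for $\cF$, since its family of filtrations is $(F, F \cap E^\rho(i))$, the quantity $\iota_\rho(\cF)$ appearing in the formula becomes exactly $\iota_\rho(F) = \sum_{i\in \Z} i \left(\dim(F \cap E^\rho(i)) - \dim(F \cap E^\rho(i-1))\right)$, and $\rk(\cF) = \dim(F)$. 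Plugging in and rearranging the stability inequality $\mu_L(\cF) < \mu_L(\cE)$ (resp. $\leq$ for semistability), the minus signs cancel on both sides and one obtains the inequality stated in the proposition. The statement for semistability follows identically with weak inequalities. No real obstacle appears in this proof: the content is entirely contained in the three cited inputs, and the only thing to verify is that the notational conventions for $\iota_\rho$ match on both sides, which is immediate from the definition.
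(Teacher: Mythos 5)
Your proposal is correct and follows exactly the route the paper itself takes: Kool's reduction to equivariant saturated reflexive subsheaves, the description of such subsheaves as $(F, F\cap E^\rho(i))$ via \cite[Lemma 2.15]{NapTip}, and Klyachko's slope formula \eqref{eq:slope}, with the sign flip from the minus sign in the slope formula handled correctly. The paper treats the proposition as a direct summary of these three cited inputs, so there is nothing to add.
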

 \begin{remark}
  As observed in Section \ref{sec:functordefinition}, for a given flip as in Section \ref{sec:flips}, the families of filtrations for $\cE\in\Obj(\Ref^T(X))$, $\psi_*\cE$ and $\phi_*\cE$ are the same. We thus have the equalities $\iota_\rho(\cE)=\iota_\rho(\phi_*\cE)=\iota_\rho(\psi_*\cE)$. Then, to compare slopes on $X$, $X_0$ and $X'$, only the terms coming from the degrees of the invariant divisors $D_\rho$'s will vary according to the polarisations on each variety.
 \end{remark}

\subsection{Main result and its proof}
\label{sec:main}
Consider now $\phi : X \to X_0$ and its toric flip $\phi' : X' \to X_0$ as defined in Section \ref{sec:toricflips}. From Proposition \ref{prop:phiampledivisors} (recall also Remark \ref{rem:notationprime}), for any ample Cartier divisor $L_0$ on $X_0$, there exists $\ep_0 >0$ such that the divisors
$$
L_{-\ep}:=\phi^*L_0 - \ep D_+
$$
on $X$ and 
$$
L_{\ep}:=(\phi')^*L_0 + \ep D_+
$$
on $X'$ define ample $\Q$-Cartier divisors for $\ep\in (0,\ep_0)$. We will then be interested in the behaviour of stability for toric sheaves related by the flip functor on $(X, L_{-\ep})$ and $(X', L_\ep)$, for $0<\ep<\ep_0$. Note that a necessary condition for stability of an element $\cE\in\Obj(\Ref^T(X))$ under those polarisations is $L_0$-semistability of $\phi_* \cE$.

Conversely, if the sheaf $\phi_* \cE$ is $L_0$-semistable, it then admits a Jordan-H\"older filtration
$$
0=\cE_1 \subseteq \cE_2 \subseteq \ldots \subseteq \cE_\ell= \phi_* \cE
$$
by slope semistable coherent and saturated subsheaves with stable quotients of the same slope as $\phi_* \cE$ (see e.g. \cite{HuLe}).
We denote by
$$
\Gr(\phi_* \cE):=\bigoplus_{i=1}^{\ell-1}\cE_{i+1}/\cE_i
$$
the graded object of $\phi_* \cE$ and by $\fE$ the finite set of equivariant and saturated reflexive subsheaves $\cF \subseteq \phi_* \cE$ arising in a Jordan-H\"older filtration of $\phi_* \cE$.
Note that by Proposition \ref{prop:equivalentcategoriesflip}, for any $\cF \in \fE$,
$(\phi^* \cF)^{\vee \vee}$ (resp. $((\phi')^* \cF)^{\vee \vee}$) is saturated in $\cE$ (resp. in $\psi_* \cE$).

\begin{theorem}
 \label{theo:main}
 Let $\cE$ be a toric sheaf on $X$. Then, up to shrinking $\ep_0$, we have for all $\ep\in(0,\ep_0)$ :
 \begin{enumerate}[label=(\roman*)]
  \item If $\phi_*\cE$ is $L_0$-stable, then $\cE$ (resp. $\psi_*\cE$) is $L_{-\ep}$-stable on $X$ (resp. $L_\ep$-stable on $X'$).
  \item If $\phi_*\cE$ is $L_0$-unstable, then $\cE$ (resp. $\psi_*\cE$) is $L_{-\ep}$-unstable on $X$ (resp. $L_\ep$-unstable on $X'$).
  \item If $\phi_*\cE$ is $L_0$-semistable, and if for every $\cF \in \fE$,
  $$
  \dfrac{c_1(\cE) \cdot D_{+} \cdot (\phi^* L_0)^{n-2}}{\rk(\cE)} <
  \dfrac{c_1((\phi^*\cF)^{\vee \vee}) \cdot D_{+} \cdot (\phi^* L_0)^{n-2}}{\rk(\cF)}
  $$
  then $\cE$ (resp. $\psi_*\cE$) is $L_{-\ep}$-stable on $X$ (resp. $L_\ep$-unstable on $X'$).
  \label{point:theo-main3}
  \item If $\phi_*\cE$ is $L_0$-semistable, and if for every $\cF \in \fE$,
  $$
  \dfrac{c_1(\cE) \cdot D_{+} \cdot (\phi^* L_0)^{n-2}}{\rk(\cE)} >
  \dfrac{c_1((\phi^*\cF)^{\vee \vee}) \cdot D_{+} \cdot (\phi^* L_0)^{n-2}}{\rk(\cF)}
  $$
  then $\cE$ (resp. $\psi_*\cE$) is $L_{-\ep}$-unstable on $X$ (resp. $L_\ep$-stable on $X'$).
  \label{point:theo-main4}
 \end{enumerate}
\end{theorem}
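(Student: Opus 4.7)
The plan is to Taylor-expand the slope $\mu_{L_{\pm\epsilon}}$ in powers of $\epsilon$, identify the zeroth and first-order terms with the quantities appearing in the statement, and use the finiteness (via Klyachko) of the possible numerical types of equivariant subsheaves to extract a uniform $\epsilon_0$.

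\textbf{Step 1 (expansion).} Fix an equivariant saturated subsheaf $\cE'\subset \cE$. By Klyachko's equivalence and Proposition \ref{prop:equivalentcategoriesflip} it corresponds to a subspace $F\subsetneq E$ and to an equivariant saturated $\cF=\phi_*\cE'\subseteq \phi_*\cE$, with $\cE'=(\phi^*\cF)^{\vee\vee}$. Since $\cE'$ and $\phi^*\cF$ agree outside a codimension-$2$ closed subset, $c_1(\cE')=\phi^*c_1(\cF)$, so the projection formula yields $c_1(\cE')\cdot(\phi^*L_0)^{n-1}=c_1(\cF)\cdot L_0^{n-1}$. Expanding $L_{-\epsilon}^{n-1}=(\phi^*L_0)^{n-1}-(n-1)\epsilon\,(\phi^*L_0)^{n-2}\cdot D_+ +O(\epsilon^2)$ gives
\begin{equation*}
\mu_{L_{-\epsilon}}(\cE')-\mu_{L_{-\epsilon}}(\cE)=\bigl[\mu_{L_0}(\cF)-\mu_{L_0}(\phi_*\cE)\bigr]+(n-1)\epsilon\, B(\cF)+O(\epsilon^2),
\end{equation*}
where
$$B(\cF):=\frac{c_1(\cE)\cdot D_+\cdot(\phi^*L_0)^{n-2}}{\rk(\cE)}-\frac{c_1((\phi^*\cF)^{\vee\vee})\cdot D_+\cdot(\phi^*L_0)^{n-2}}{\rk(\cF)}.$$
Since $\psi$ is an isomorphism in codimension two, it preserves intersection numbers of $\Q$-Cartier classes, so the analogous expansion on $(X',L_\epsilon)$ differs only by flipping the sign of the first-order coefficient.

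\textbf{Step 2 (uniform $\epsilon_0$).} By Proposition \ref{prop:stabilityequivariantcase}, stability is tested only on such equivariant $\cE'$. Klyachko's slope formula \eqref{eq:slope} and the analogous formula for $c_1$ express both $\mu_{L_0}(\cF)$ and $B(\cF)$ through the integer tuple $(\iota_\rho(F))_{\rho\in\Sigma(1)}$, which takes only finitely many values as $\dim F\in\{1,\ldots,\dim E-1\}$ and $F$ vary. The finitely many resulting pairs $(\mu_{L_0}(\cF)-\mu_{L_0}(\phi_*\cE),\,B(\cF))$ then yield a uniform $\epsilon_0>0$ such that, for $0<\epsilon<\epsilon_0$, the sign of the slope difference is controlled by the zeroth-order term when nonzero and by the first-order term otherwise.

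\textbf{Step 3 and main obstacle.} Cases (i), (ii) are immediate, as a nonzero zeroth-order term dominates under small perturbation. For (iii)--(iv), the delicate subsheaves are those $\cF\subseteq\phi_*\cE$ saturated with $\mu_{L_0}(\cF)=\mu_{L_0}(\phi_*\cE)$; for these the slope difference reduces to its first-order term, which is positive when $B(\cF)>0$ on $X$ and when $-B(\cF)>0$ on $X'$. Hypothesis (iii) (resp.\ (iv)) asserts $B(\cF)<0$ (resp.\ $>0$) for all $\cF\in\fE$, giving the claimed stability/instability statements --- \emph{provided} every saturated equivariant $\cF\subseteq\phi_*\cE$ with equal $L_0$-slope lies in $\fE$. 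This is the main obstacle. It is resolved by noting that any such $\cF$ has $L_0$-semistable quotient $\phi_*\cE/\cF$ of the same slope, so one may concatenate a Jordan--H\"older filtration of $\cF$ with the pullback to $\phi_*\cE$ of a Jordan--H\"older filtration of $\phi_*\cE/\cF$, producing a Jordan--H\"older filtration of $\phi_*\cE$ in which $\cF$ appears, hence $\cF\in\fE$.
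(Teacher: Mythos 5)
Your overall architecture --- the $\ep$-expansion of the slopes, the reduction to finitely many equivariant saturated subsheaves via Klyachko/Kool, and the Jordan--H\"older argument showing that every equal-slope saturated subsheaf of $\phi_*\cE$ lies in $\fE$ --- matches the paper's, and Steps 2 and 3 are essentially correct (your concatenation argument is the standard fact the paper cites from Huybrechts--Lehn, and finiteness of the relevant numerical invariants is exactly the paper's appeal to the finiteness of the $\iota_\rho(F)$).

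There is, however, a genuine gap in Step 1, at the one point where the proof actually requires work. You assert that ``since $\psi$ is an isomorphism in codimension two, it preserves intersection numbers of $\Q$-Cartier classes,'' and use this to conclude that the first-order coefficient of the slope difference on $(X',L_\ep)$ is exactly $-(n-1)\,B(\cF)$. That general principle is false: small birational maps do not preserve top intersection numbers of corresponding divisor classes. Concretely, for distinct $i,j\in J_+$ with $J_+=\{i,j\}$, the cone $\Cone(\nu_i,\nu_j)$ lies in $\Sigma$ but not in $\Sigma'$, so the class $D_{\rho_i}\cdot D_{\rho_j}$ is nonzero on $X$ while $D'_{\rho_i}\cdot D'_{\rho_j}=0$ on $X'$; and the paper's own example exhibits $D_\rho\cdot (D_+)^2\neq D'_\rho\cdot (D'_+)^2$. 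What is true --- and what occupies the first half of the paper's proof --- is the specific identity $\phi_*(D_\rho\cdot D_+)=\phi'_*(D'_\rho\cdot D'_+)$ for every ray $\rho$, which via the projection formula yields $D_\rho\cdot D_+\cdot(\phi^*L_0)^{n-2}=D'_\rho\cdot D'_+\cdot((\phi')^*L_0)^{n-2}$; the presence of $n-2$ factors pulled back from $X_0$ is essential here. Establishing it requires a case analysis: for $\rho\notin\{\rho_j : j\in J_+\}$ the relevant two-dimensional cones agree in $\Sigma$, $\Sigma'$ and $\Sigma_0$ (or are absent from both), but for $\rho=\rho_j$ with $j\in J_+$ one must first use the divisor of a character $\chi^{a_j\nu_j^*}$ to rewrite $a_jD_{\rho_j}$ as a combination of divisors indexed by rays outside $J_+$, and only then reduce to the first case. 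Without this identity (or an equivalent substitute, e.g.\ cutting by $n-2$ general members of $|mL_0|$ to reduce to a surface missing the exceptional loci), the claimed sign-flip symmetry between the expansions on $X$ and $X'$ --- and hence the ``resp.'' halves of all four statements --- is unjustified.
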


\begin{remark}
 Note that, given the semistability of $\phi_*\cE$, the numerical criterion in points $(iii)$ and $(iv)$ only requires testing a {\it finite} number of inequalities, as $\fE$ is finite. This makes this criterion useful in practice.
\end{remark}

Before proving this theorem, we first recall some facts on intersection products in toric varieties that will be used. 
Let $\{u_1, \ldots, u_k\}$ be a set of primitive elements of $N$ such that $\sigma = \mathrm{Cone}(u_1, \ldots, u_k)$ is simplicial. We define $\mult(\sigma)$ as the index of the sublattice $\Z u_1 + \ldots + \Z u_k$ in $N_{\sigma} = \Span(\sigma) \cap N$.
If $\Sigma$ is simplicial, according to \cite[Section 5.1]{ful93}, one can consider intersections of cycles or cycle classes with {\it rational} coefficients. The Chow group
$$
A^{\bullet}(X)_\Q = \bigoplus_{p=0}^{n} A^{p}(X) \otimes \Q =
\bigoplus_{p=0}^{n} A_{n-p}(X) \otimes \Q
$$
has the structure of a graded $\Q$-algebra and by \cite[Lemma 12.5.2]{CLS}, if $\rho_1, \ldots, \rho_d \in \Sigma(1)$ are distinct, then in $A^{\bullet}(X)_\Q$, we have
\begin{equation}\label{eq:intersection-toric-divisors}
[D_{\rho_1}] \cdot [D_{\rho_2}] \cdots [D_{\rho_d}] = \left\lbrace
\begin{array}{ll}
\dfrac{1}{\mult(\sigma)} [ V(\sigma)] &
\text{if $\sigma = \rho_1 + \ldots + \rho_d \in \Sigma$} \\ 0 & \text{otherwise.}
\end{array}
\right..
\end{equation}
If $\chi^m$ is the weight $m$ character function on $X_\Sigma$, then by \cite[Proposition 4.1.2 and (12.5.4)]{CLS}, the divisor of $\chi^m$ is given by
\begin{equation}\label{eq:divisor-of-character}
{\rm div}(\chi^m) = \sum_{\rho \in \Sigma(1)} \<m, u_\rho \> D_\rho
\end{equation}
and ${\rm div}(\chi^m) \sim_{\rm lin} 0$ in $A^1(X_\Sigma)$.

\begin{proof}[Proof of Theorem \ref{theo:main}]
We first prove that for any $\rho \in \Sigma(1)$,
\begin{equation}\label{eq:intersection-withD+}
\phi_*(D_\rho \cdot D_+) = \phi'_*(D'_\rho \cdot D'_+).
\end{equation}
We use the notation of Section \ref{sec:flips} for the toric flip, and set  
$$\Delta = \{\Cone(\nu_i) : i \in J_+ \}.$$
Recall from Definition \ref{def:flippingcone} that $J_+$ and $J_-$ have at least two elements. It then follows from the definition of $\Sigma_\pm$ that for any $\rho \in \sigma_0(1) \setminus \Delta$, and any $j \in J_+$, $\rho + \Cone(\nu_j)$ is a two-dimensional cone of $\Sigma_0$, $\Sigma_+$ and $\Sigma_{-}$.
Therefore, we deduce that in the Chow ring $A^{\bullet}(X_0)_{\Q}$,
$$
\phi_*(D_\rho \cdot D_{\rho_j}) = D_\rho \cdot D_{\rho_j}
\quad \text{and} \quad
\phi'_*(D'_\rho \cdot D'_{\rho_j}) = D_\rho \cdot D_{\rho_j}.
$$
If $\rho \in \Sigma(1) \setminus \sigma_0(1)$, then for any $j \in J_+$, 
$$\rho + \Cone(\nu_j) \notin \{\tau : \tau \preceq \sigma_0 \}.
$$
As by Definition \ref{def:toricflip},
$$\Sigma_{\vert N_\R \setminus \sigma_0}= \Sigma_{\vert N_\R \setminus \sigma_0}'=( \Sigma_0)_{\vert N_\R\setminus \sigma_0},$$
we deduce that:
\begin{itemize}
\item
either $\rho + \Cone(\nu_j) \in \Sigma_0 \setminus \{ \tau : \tau \preceq \sigma_0\}$, and then in $A^{\bullet}(X_0)_{\Q}$, we have
$$
\phi_*(D_\rho \cdot D_{\rho_j}) = D_\rho \cdot D_{\rho_j} =
\phi'_*(D'_\rho \cdot D'_{\rho_j});
$$
\item
or
$\rho + \Cone(\nu_j) \notin \Sigma_0 \setminus \{ \tau : \tau \preceq \sigma_0\}$, in which case $D_\rho \cdot D_{\rho_j} = 0$ in $A^{\bullet}(X)_\Q$ and $D'_\rho \cdot D'_{\rho_j} = 0$ in $A^{\bullet}(X')_\Q$.
\end{itemize}
This proves that for any $\rho \in \Sigma(1) \setminus \Delta$ and any $j \in J_+$,
$$
\phi_*(D_\rho \cdot D_{\rho_j}) = \phi'_*(D'_\rho \cdot D'_{\rho_j})
$$
and then by linearity and the definition of $D_+$:
$$
\phi_*(D_\rho \cdot D_+) = \phi'_*(D'_\rho \cdot D'_+).
$$
We now assume that $\rho \in \Delta$. By Lemma \ref{lem:localtoricflip}, one has $\dim V(\sigma_{J_+}) = n - |J_+|$; therefore $\sigma_{J_+}$ is a simplicial cone and then $\{\nu_j : j \in J_+\}$ forms part of a $\Q$-basis of $N \otimes_\Z \Q$.
Let $\{\nu_j^* : j \in J_+ \}$ be a part of a $\Q$-basis of $M \otimes_{\Z} \Q$ such that for any $i, j \in J_+$,
$$
\< \nu_j^*, \nu_i \> = \left\lbrace
\begin{array}{ll}
0 & \text{if $i \neq j$} \\ 1 & \text{if $i=j$}
\end{array}
\right..
$$
For any $j \in J_+$, there is $a_j \in \N^*$ such that $a_j \nu_j^* \in M$. By using (\ref{eq:divisor-of-character}) with $m = a_j \nu_j^*$, we get
$$
a_j D_{\rho_j} \sim_{\rm lin} - \sum_{\rho \in \Sigma(1) \setminus \Delta}\< a_j \nu_j^*, u_\rho \> D_\rho .
$$
on $X$ and $X'$. By the first cases, we deduce that for any $j \in J_+$,
\begin{align*}
\phi_*(D_{\rho_j} \cdot D_+) & = - \phi_* \left(
\sum_{\rho \in \Sigma(1) \setminus \Delta} \< \nu_j^*, u_\rho \> D_\rho \cdot D_+
\right)
\\ & =
- \phi'_* \left(
\sum_{\rho \in \Sigma(1) \setminus \Delta} \< \nu_j^*, u_\rho \> D'_\rho \cdot D'_+
\right)
\\ & = \phi'_*( D'_{\rho_j} \cdot D'_+ ).
\end{align*}
This concludes the proof of (\ref{eq:intersection-withD+}).

We can now compute the slopes.
By the Projection formula \cite[Proposition 2.3]{Fulton-intersection}, for any $\rho \in \Sigma(1)$, one has
\begin{align*}
& D_\rho \cdot (\phi^* L_0)^{n-1} = \deg_{L_0}(D_\rho),
\\
& D'_\rho \cdot ((\phi')^* L_0)^{n-1} = \deg_{L_0}(D_\rho)
\quad \text{and}\\
& D_\rho \cdot D_+ \cdot (\phi^* L_0)^{n-2} = D'_\rho \cdot D'_+ \cdot ((\phi')^* L_0)^{n-2}.
\end{align*}
As we have
\begin{align*}
& (L_{-\ep})^{n-1} = (\phi^* L_0)^{n-1} - (n-1) \ep D_{+} \cdot (\phi^* L_0)^{n-2} + O(\ep^2)
\quad \text{and} \\
& (L_{\ep})^{n-1} = ((\phi')^* L_0)^{n-1} + (n-1) \ep D_{+} \cdot ((\phi')^* L_0)^{n-2} + O(\ep^2),
\end{align*}
we deduce that: for any coherent sheaf $\cE$ on $X$,
$$
\mu_{L_{-\ep}}(\cE) = \mu_{L_0}(\phi_* \cE) -
\dfrac{c_1(\cE) \cdot D_+ \cdot (\phi^* L_0)^{n-2} }{\rank(\cE)}(n-1) \ep + O(\ep^2)
$$
and
$$
\mu_{L_{\ep}}(\psi_* \cE) = \mu_{L_0}(\phi_* \cE) +
\dfrac{c_1(\cE) \cdot D_+ \cdot (\phi^* L_0)^{n-2} }{\rank(\cE)}(n-1) \ep + O(\ep^2).
$$
Assume now $\cE\in\Obj(\Ref^T(X))$ is given by the family of filtrations $(E, E^\rho(j))$. By Proposition \ref{prop:stabilityequivariantcase}, to check the stability of $\cE$, it is enough to compare the slope of $\cE$ with the slope of any equivariant reflexive sheaf $\cF$ given by the family of filtrations $(F, F \cap E^\rho(j))$ for $F \subsetneq E$ a subspace.
As the set of vector subspaces $F\subset E$ on which it is necessary to test slopes is actually finite (see \cite[Lemma 2.17]{NapTip}), we deduce from the above $\ep$-expansions for the slopes that there is $\ep_0 >0$ such that for all $\ep \in (0, \ep_0)$:
\begin{itemize}
\item
if $\phi_*\cE$ is $L_0$-stable, then $\cE$ (resp. $\psi_*\cE$) is $L_{-\ep}$-stable on $X$ (resp. $L_\ep$-stable on $X'$);
\item
and if $\phi_*\cE$ is $L_0$-unstable, then $\cE$ (resp. $\psi_*\cE$) is $L_{-\ep}$-unstable on $X$ (resp. $L_\ep$-unstable on $X'$).
\end{itemize}
We now consider the case where $\phi_* \cE$ is $L_0$-semistable.
We first observe, as in the (un)stable case, that there is $\ep_1 >0$, such that for all $\ep \in (0, \ep_1)$ and for all $\cF\in\Obj(\Ref^T(X_0))$ with $\cF \subsetneq \phi_* \cE$ such that $\mu_{L_0}(\cF)< \mu_{L_0}(\phi_* \cE)$ one has
$$
\mu_{L_{-\ep}}( (\phi^* \cF)^{\vee \vee} ) < \mu_{L_{-\ep}}(\cE) \quad \text{and} \quad
\mu_{L_{\ep}}( ((\phi')^* \cF)^{\vee \vee} ) < \mu_{L_{\ep}}( \psi_* \cE).
$$
If $\cF \subsetneq \phi_* \cE$ is a sub object such that $\mu_{L_0}(\cF) = \mu_{L_0}(\phi_*\cE)$, then there is a Jordan-H{\"o}lder filtration
$$
0 = \cE_1 \subseteq \ldots \subseteq \cE_l = \phi_*\cE
$$
with $l \geq 1$ such that $\cF= \cE_i$ for some $i \in \{1, \ldots, l \}$ (see \cite[Section 1.6]{HuLe}) and we deduce that $\cF \in \fE$. Therefore, from the expansions of the slopes, to get the points $(iii)$ and $(iv)$ of the theorem, it suffices to compare
$$
\dfrac{c_1(\cE) \cdot D_+ \cdot (\phi^* L_0)^{n-2} }{\rank(\cE)} \quad \text{and} \quad
\dfrac{c_1((\phi^* \cF)^{\vee \vee}) \cdot D_+ \cdot (\phi^* L_0)^{n-2} }{\rank(\cF)}
$$
for any $\cF \in \fE$. By uniqueness of the reflexive hull of the graded object of a Jordan--H\"older filtration (\cite[Theorem 1.6.7]{HuLe}),  $\fE$ is finite, and the result follows.
\end{proof}

\begin{remark}
In the proof, we have shown that for any $\rho \in \Sigma(1)$, 
$$
D_\rho \cdot D_+ \cdot (\phi^* L_0)^{n-2}= D'_\rho \cdot D'_+ \cdot ((\phi')^* L_0)^{n-2}.
$$
In Equation (\ref{eq:degree-divisorX}), the coefficient of $\ep^2$ in the $\ep$-expansion of $\deg_{L_{-\ep}}(D_\rho)$ corresponds to $D_\rho \cdot (D_{+})^2$.
By (\ref{eq:degree-divisorX}) and (\ref{eq:degree-divisorX2}) we note that there exists $\rho \in \Sigma(1)$ such that
$$
D_\rho \cdot (D_{+})^2 \neq D'_\rho \cdot (D'_{+})^2.
$$
Therefore, if $\ell \geq 2$, for any $\rho \in \Sigma(1)$, we do not necessarily have the equality
$$
D_\rho \cdot (D_+)^{\ell} \cdot (\phi^* L_0)^{n-1- \ell}=
D'_\rho \cdot (D'_+)^{\ell} \cdot ((\phi')^* L_0)^{n-1- \ell}.
$$
The arguments used to prove Theorem \ref{theo:main} are very close to those used in \cite{NapTip}. One should be careful though that the results from \cite{NapTip} do not directly imply Theorem \ref{theo:main}, as $X_0$ is not $\Q$-factorial.
\end{remark}

\begin{remark}
 While the case when $\phi_*\cE$ is semistable on $X_0$ is not fully covered by Theorem \ref{theo:main}, items $(iii)$ and $(iv)$, one can easily adapt the numerical criterion of \cite[Theorem 1.3]{NapTip} to take into account higher order terms in the $\ep$-expansions of the $L_{-\ep}$ and $L_\ep$ slopes, and obtain a full description of the stability behaviour of $\cE$ in terms of that of $\phi_*\cE$, for the considered polarisations.
\end{remark}

Actually, Theorem \ref{theo:main} holds for some specific flat families of toric sheaves. We recall that the {\it characteristic function} $\chi$ of an equivariant reflexive sheaf $\cF$ with family of filtrations $(F, \{F^{\rho}(j) \})$ is the function
$$
\begin{array}{cccl}
\chi(\cF) : & M & \longrightarrow & \Z^{\sharp \Sigma(n)} \\
& m & \longmapsto & \left(
\dim \left( \bigcap_{\rho \in \sigma(1)} F^{\rho}(\<m, u_\rho \>) \right)
\right)_{\sigma \in \Sigma(n)}
\end{array}.
$$
Let $S$ be a scheme of finite type over $\C$ and $\cE = (\cE_s)_{s \in S}$ be an $S$-family of equivariant reflexive sheaves over $X$ (see \cite[Section 3.5]{NapTip} for more details).
We denote by $(E_s, E_{s}^{\rho}(i) )$ the family of filtrations of $\cE_s$. There is a collection of increasing filtrations of reflexive sheaves
$$
(\cF, \{ \cF_{m}^{\rho}: m \in M \}_{\rho \in \Sigma(1)} )
$$
such that for any $s \in S$ and all $m \in M$,
$$
E_s = \cF(s) \quad \text{and} \quad E_{s}^{\rho}(\<m, u_\rho \>) = \cF_{m}^{\rho}(s)
$$
where $\cF(s)$ and $\cF_{m}^{\rho}(s)$ are respectively the fibers of $\cF$ and $\cF_{m}^{\rho}$ at $s$.

\begin{lemma}\label{lem:dim-constant}
Let $X$ be a toric variety given by a simplicial fan $\Sigma$ and let $\cE= (\cE_s)_{s \in S}$ be an $S$-family of equivariant reflexive sheaves over $X$ such that:
\begin{enumerate}
\item $\cE$ is locally free over $X \times S$, or
\item the map $s \mapsto \chi(\cE_s)$ is constant.
\end{enumerate}
Then, for all $\rho \in \Sigma(1)$ and $j \in \Z$, the map $s \mapsto \dim( E_s^{\rho}(j) )$ is constant.
\end{lemma}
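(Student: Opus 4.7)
The plan is to reduce both cases to the following extraction principle: the dimensions $\dim E_s^\rho(j)$ can be read off from the characteristic function $\chi(\cE_s)$, so hypotheses (1) and (2) both force $\chi(\cE_s)$ to be (locally) constant in $s$ and thence the filtration dimensions to be constant as well.

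Fix $\rho \in \Sigma(1)$, choose a top-dimensional cone $\sigma \in \Sigma(n)$ containing $\rho$, and enumerate $\sigma(1) = \{\rho = \rho_1, \rho_2, \ldots, \rho_n\}$. Since $\Sigma$ is simplicial, $\{u_{\rho_1}, \ldots, u_{\rho_n}\}$ is a $\Q$-basis of $N_\Q$; let $\{u_{\rho_i}^*\}$ be its dual basis in $M_\Q$ and pick $d \in \N^*$ with $d\, u_{\rho_i}^* \in M$ for every $i$. Given $j \in \Z$, primitivity of $u_{\rho_1}$ provides $m_0 \in M$ with $\langle m_0, u_{\rho_1}\rangle = j$, and for $N \geq 1$ we set
$$
m_N := m_0 + Nd \sum_{i \geq 2} u_{\rho_i}^* \in M,
$$
so that $\langle m_N, u_{\rho_1}\rangle = j$ while $\langle m_N, u_{\rho_i}\rangle \to +\infty$ for $i \geq 2$. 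As each filtration is bounded, for $N$ large enough (depending a priori on $s$)
$$
\bigcap_{\rho' \in \sigma(1)} E_s^{\rho'}(\langle m_N, u_{\rho'}\rangle) = E_s^{\rho}(j).
$$
Under hypothesis (2), the integer-valued sequence $N \mapsto \chi(\cE_s)(m_N)_\sigma$ is independent of $s$, non-decreasing, and bounded by $\rk(\cE_s)$, which is itself constant in $s$ (evaluate $\chi(\cE_s)$ at a character deep in the interior of $\sigma^\vee \cap M$, e.g.\ $k \sum_i d u_{\rho_i}^*$ for large $k$). This sequence therefore stabilises at a value which, for every $s$, equals $\dim E_s^\rho(j)$. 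The stabilised value being the same for all $s$, we conclude that $s \mapsto \dim E_s^\rho(j)$ is constant.

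Under hypothesis (1), it suffices to prove that $\chi(\cE_s)$ is locally constant, reducing then to case (2) on each connected component of $S$. The restriction of $\cE$ to $\{x_\sigma\} \times S$ (where $x_\sigma$ is the $T$-fixed point attached to $\sigma$) is a locally free $\cO_S$-module of rank $\rk(\cE)$ endowed with a fibrewise $T$-action. Reductivity of $T$ splits it as a direct sum of $T$-weight subsheaves, each a direct summand of a locally free sheaf and hence itself locally free on $S$; the ranks of these summands are therefore locally constant. Klyachko's description of toric vector bundles produces, at the full-dimensional cone $\sigma$, a simultaneous splitting $E_s = \bigoplus_m E_{m,s}$ of the filtrations $\{E_s^{\rho'}(\cdot)\}_{\rho' \in \sigma(1)}$, and a short computation identifies $\dim E_{m,s}$ with the $m$-th weight multiplicity of the fibre at $x_\sigma$; moreover
$$
\chi(\cE_s)(m)_\sigma = \dim \bigcap_{\rho' \in \sigma(1)} E_s^{\rho'}(\langle m, u_{\rho'}\rangle) = \sum_{m' :\, m - m' \in \sigma^\vee} \dim E_{m',s}.
$$
Local constancy of the weight multiplicities therefore transfers to local constancy of $\chi(\cE_s)$, as required.

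The main technical hurdle is the construction of the characters $m_N \in M$ realising the prescribed value $j$ at $u_{\rho_1}$ together with arbitrarily large values at the remaining $u_{\rho_i}$'s; this is where the simplicial hypothesis enters in an essential way, through the rational dual basis and the denominator $d$. A secondary point in case (1) is the identification, at the $T$-fixed point $x_\sigma$, of the $T$-weight multiplicities on the fibre of a locally free equivariant sheaf with the dimensions of the pieces in the Klyachko splitting: standard but requiring a short direct verification from Klyachko's classification of toric vector bundles.
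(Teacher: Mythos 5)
Your proof is correct. For hypothesis (2) you follow essentially the paper's route: pick a character $m$ in a maximal cone $\sigma\ni\rho$ with $\langle m,u_\rho\rangle=j$ and with $\langle m,u_{\rho_i}\rangle$ large for the remaining rays, so that $\chi(\cE_s)(m)_\sigma=\dim E_s^\rho(j)$; the simplicial hypothesis enters through the rational dual basis exactly as in the paper. The paper realises ``large enough'' by fixing a threshold $i_\rho$ beyond which all the filtrations are full, uniformly in $s$, whereas you let $N\to\infty$ and note that the $s$-independent, non-decreasing, bounded sequence $N\mapsto\chi(\cE_s)(m_N)_\sigma$ stabilises at $\dim E_s^\rho(j)$ for every $s$; your variant is slightly cleaner in that it does not require justifying that the threshold can be chosen uniformly over $S$. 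For hypothesis (1) the two arguments genuinely differ: the paper invokes Payne's compatibility condition for $S$-families of equivariant locally free sheaves (a single multiset $A_\sigma\subset M$ computes the rank of every $\cF_m^\rho$ for the whole family), which yields the constancy directly and globally; you instead re-derive (local) constancy of $\chi(\cE_s)$ from the $T$-weight decomposition of the fibre of $\cE$ at the fixed point $x_\sigma$ and then feed the result into case (2). Your route is more self-contained, at the price of leaning on the standard identification of the weight multiplicities of $\cE_s\vert_{x_\sigma}$ with the dimensions of the pieces of Klyachko's splitting (which you rightly flag as needing a short verification), and of delivering only constancy on each connected component of $S$ rather than on all of $S$ -- a harmless difference for the application, and no difference at all when $S$ is connected.
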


\begin{proof}
If $\cE$ is locally free over $X \times S$, by \cite[Proposition 3.13]{payne2008} (Klyachko's compatibility condition for $S$-families of locally free sheaves), for any $\sigma \in \Sigma(n)$, there is a multiset $A_\sigma \subseteq M$ of size $\rk(\cE)$ such that for any $m \in M$, $\cF_{m}^{\rho}$ is a locally free sheaf of rank
$$
\left| \{ \alpha \in A_\sigma : \< \alpha, u_\rho \> \leq \<m, u_\rho \> \} \right| .
$$
As for any $s \in S$ and $m \in M$, $\dim( \cF_{m}^{\rho}(s) ) = \rk( \cF_{m}^{\rho} )$,
we deduce that the map
$$
s \longmapsto \dim(E_{s}^{\rho}(\< m, u_\rho \>))
$$
is constant.

We now assume that the map $s \mapsto \chi(\cE_s)$ is constant.
For any $\rho \in \Sigma(1)$, we denote by $i_\rho$ the smallest integer such that for any $j \geq i_\rho$ and any $s \in S$,
$$
E_s^\rho(j) = E_s.
$$
Let $\sigma \in \Sigma(n)$. The set $\lbrace u_\rho : \rho \in \sigma(1) \rbrace$ is a $\Q$-basis of $N \otimes_\Z \Q$; we denote by $\{u_\rho^*: \rho \in \sigma(1) \}$ its dual basis.
For any $\rho' \in \sigma(1)$, there is $m' \in M$, such that $\<m', u_{\rho'} \>=j$. Let
$m \in M$ be given by
$$
m = m' + \sum_{\rho \in \sigma(1) \setminus \{\rho'\}} a_\rho u^*_\rho
$$
where for any $\rho \in \sigma(1) \setminus \{\rho'\}$, $a_\rho \in \Z$ satisfies $a_\rho u^*_\rho \in M$ and $a_\rho + \<m', u_\rho \> \geq i_\rho$. By construction of $m$, one has
$$
\bigcap_{\rho \in \sigma(1)} E_s^\rho(\<m, u_\rho \>) = E_{s}^{\rho'}(j).
$$
As $s \mapsto \chi(\cE_s)$ is constant, we deduce that the map $s \mapsto \dim(E_s^{\rho'}(j))$ is constant for any $\rho' \in \sigma(1)$ and any $j \in \Z$.
\end{proof}

If $\cE$ is an $S$-family of equivariant reflexive sheaves on $X$ which satisfies the conditions of Lemma \ref{lem:dim-constant}, then by \cite[Lemma 3.12]{NapTip}, for any ample $\Q$-Cartier divisor $L$ on $X$, the set
$$
\left\lbrace
\mu_{L}(\cG_s): s \in S, \:
\text{$\cG_s$ is an equivariant and saturated reflexive subsheaf of $\cE_s$}
\right\rbrace
$$
is finite.
Therefore, in that case, the $\ep_0$ in Theorem \ref{theo:main} can be taken uniformly for $(\cE_s)_{s \in S}$.

\subsection{An example}
\label{sec:example}
We illustrate in this section Theorem \ref{theo:main} by providing an example of a tangent sheaf that goes from unstable to stable through a flip.

We denote by $(e_1, e_2, e_3)$ the standard basis of $\Z^3$. Let
$$
u_1 = e_1 , \quad u_2= e_1 + e_2 - e_3, \quad u_3 = e_2, \quad u_4 = e_3, \quad
u_0 = -(e_1+e_2+e_3)
$$
and $\Sigma_0$ be a fan in $\R^3$ given by
$$
\Sigma_0 = \{ \Cone(u_1, u_2, u_3, u_4) \} \cup \bigcup_{i=1}^{4}
\{ \Cone(A): A \subseteq \{u_0, u_i, u_{i+1} \} \}
$$
where $u_5 = u_1$. We denote by $\sigma_0$ the flipping cone $\Cone(u_1, u_2, u_3, u_4)$; we have $$u_2 + u_4 - u_1 - u_3 = 0.$$
Let
$$
\Sigma = (\Sigma_0 \setminus \{\sigma_0\}) \cup \Sigma_{+} \quad \text{and} \quad
\Sigma' = (\Sigma_0 \setminus \{\sigma_0\}) \cup \Sigma_{-}$$
where
\begin{align*}
\Sigma_{+} &= \{ \Cone(u_j: j \in J): J \subset \{1, \ldots, 4 \} \text{ and } \{1,3\} \nsubseteq J \} \quad \text{and}
\\
\Sigma_{-} &= \{ \Cone(u_j: j \in J): J \subset \{1, \ldots, 4 \} \text{ and } \{2,4\} \nsubseteq J \}.
\end{align*}
We denote by $D_i$ the torus invariant divisor associated to the ray $\Cone(u_i)$.
By using (\ref{eq:divisor-of-character}) with $m \in \{e_1, e_2, e_3\}$, we get the following linear equivalences of divisors on $X_0$, $X$ and $X'$:
$$
D_1 \sim_{\rm lin} D_3 \sim_{\rm lin} D_0 - D_2 \quad \text{and} \quad
D_4 \sim_{\rm lin} D_0 + D_2.
$$
By \cite[Theorem 4.2.8.(d)]{CLS}, the divisor $D_0$ generates the set of invariant Cartier divisors of $X_0$. As $\Sigma$ (resp. $\Sigma'$) is simplicial, by \cite[Proposition 4.2.7]{CLS} any invariant divisor of $X$ (resp. $X'$) is $\Q$-Cartier.

\begin{figure}[t]
\centering

\begin{subfigure}{0.3\textwidth}
\begin{tikzpicture}[scale=0.6]
\fill[gray!20] (0,3) -- (3,0) -- (2,-5);
\fill[gray!20] (0,3) -- (-1,-2) -- (2,-5) -- (0,0);
\fill[gray!50] (0,3) -- (2,-5) -- (0,0);
\draw[line width=0.2mm] (0,3) -- (3,0) -- (2,-5) -- (-1,-2) -- (0,3);
\draw[line width=0.2mm] (0,3) -- (2,-5);
\draw[line width=0.2mm] (0,0) -- (0,3);
\draw[line width=0.2mm] (0.8,0) -- (3,0);
\draw[dashed, line width=0.2mm] (0,0) -- (0.8, 0);
\draw[line width=0.2mm] (0,0) -- (2,-5);
\draw[line width=0.2mm] (0,0) -- (-1,-2);
\draw[line width=0.2mm] (-0.7, -0.35) -- (-2, -1);
\draw[dashed, line width=0.2mm] (0,0) -- (-0.7, -0.35);
\draw (0,0) node{$\bullet$};
\draw (-1,-2) node{$\bullet$} node[left]{$u_1$};
\draw (2,-5) node{$\bullet$} node[right]{$u_2$};
\draw (3,0) node{$\bullet$} node[above]{$u_3$};
\draw (0,3) node{$\bullet$} node[right]{$u_4$};
\draw (-2,-1) node{$\bullet$} node[above]{$u_0$};
\end{tikzpicture}
\caption{Fan of $\Sigma$}
\label{fig:flip1a}
\end{subfigure}
\hfill
\begin{subfigure}{0.3\textwidth}
\begin{tikzpicture}[scale=0.6]
\fill[gray!20] (0,3) -- (3,0) -- (2,-5) -- (-1, -2);
\draw[line width=0.2mm] (0,3) -- (3,0) -- (2,-5) -- (-1,-2) -- (0,3);
\draw[line width=0.2mm] (0,0) -- (0,3);
\draw[line width=0.2mm] (0,0) -- (3,0);
\draw[line width=0.2mm] (0,0) -- (2,-5);
\draw[line width=0.2mm] (0,0) -- (-1,-2);
\draw[line width=0.2mm] (-0.7, -0.35) -- (-2, -1);
\draw[dashed, line width=0.2mm] (0,0) -- (-0.7, -0.35);
\draw (0,0) node{$\bullet$};
\draw (-1,-2) node{$\bullet$} node[left]{$u_1$};
\draw (2,-5) node{$\bullet$} node[right]{$u_2$};
\draw (3,0) node{$\bullet$} node[above]{$u_3$};
\draw (0,3) node{$\bullet$} node[right]{$u_4$};
\draw (-2,-1) node{$\bullet$} node[above]{$u_0$};
\end{tikzpicture}
\caption{Fan of $\Sigma_0$}
\label{fig:flip1b}
\end{subfigure}
\hfill
\begin{subfigure}{0.3\textwidth}
\begin{tikzpicture}[scale=0.6]
\fill[gray!20] (0,3) -- (3,0) -- (0,0) -- (-1, -2);
\fill[gray!20] (-1,-2) -- (3,0) -- (2,-5);
\fill[gray!50] (-1,-2) -- (0,0) -- (3,0);
\draw[line width=0.2mm] (0,3) -- (3,0) -- (2,-5) -- (-1,-2) -- (0,3);
\draw[line width=0.2mm] (-1, -2) -- (3,0);
\draw[line width=0.2mm] (0,0) -- (0,3);
\draw[line width=0.2mm] (0,0) -- (3,0);
\draw[line width=0.2mm] (0.5,-1.25) -- (2,-5);
\draw[dashed, line width=0.2mm] (0,0) -- (0.5, -1.25);
\draw[line width=0.2mm] (0,0) -- (-1,-2);
\draw[line width=0.2mm] (-0.7, -0.35) -- (-2, -1);
\draw[dashed, line width=0.2mm] (0,0) -- (-0.7, -0.35);
\draw (0,0) node{$\bullet$};
\draw (-1,-2) node{$\bullet$} node[left]{$u_1$};
\draw (2,-5) node{$\bullet$} node[right]{$u_2$};
\draw (3,0) node{$\bullet$} node[above]{$u_3$};
\draw (0,3) node{$\bullet$} node[right]{$u_4$};
\draw (-2,-1) node{$\bullet$} node[above]{$u_0$};
\end{tikzpicture}
\caption{Fan of $\Sigma'$}
\label{fig:flip1c}
\end{subfigure}

\caption{Fans of varieties given in Section \ref{sec:example}}
\end{figure}
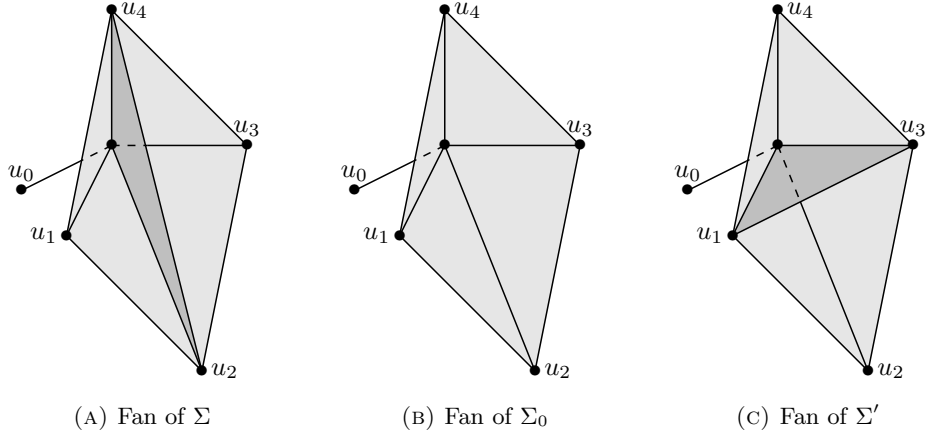

\begin{lemma}[Intersections of divisors]
\label{lem:intersection-example}
$\,$
\begin{enumerate}
\item On $X$ and $X'$, we have:
$$
\begin{array}{l}
D_1 \cdot D_0^2 = \dfrac{1}{2} \qquad D_2 \cdot D_0^2 = \dfrac{1}{4} \qquad
D_4 \cdot D_0^2 = 1 \qquad D_0 \cdot D_0^2 = \dfrac{3}{4}
\\
D_1 \cdot (D_0 \cdot D_2) = \dfrac{1}{2} \qquad D_4 \cdot (D_0 \cdot D_2)=0 \qquad
D_2 \cdot (D_0 \cdot D_2)= - \dfrac{1}{4}.
\end{array}
$$
\item On $X$, we have
$$
D_1 \cdot D_2^2 = \dfrac{1}{2} \qquad D_4 \cdot D_2^2 = -1 \qquad
D_0 \cdot D_2^2 = \dfrac{-1}{4} \qquad D_2 \cdot D_2^2= \dfrac{-3}{4}.
$$
\item On $X'$, we have
$$
D_1 \cdot D_2^2 = \dfrac{-1}{2} \qquad D_4 \cdot D_2^2 = 0 \qquad
D_0 \cdot D_2^2 = \dfrac{-1}{4} \qquad D_2 \cdot D_2^2= \dfrac{1}{4}.
$$
\end{enumerate}
\end{lemma}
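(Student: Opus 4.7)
The plan is to derive every intersection number from the two basic tools available for simplicial toric varieties: the combinatorial formula \eqref{eq:intersection-toric-divisors} for products of torus-invariant divisors associated to distinct rays, and the linear equivalences \eqref{eq:divisor-of-character} coming from characters. The only linear equivalences I will need are the three already listed in the example, coming from $\chi^{e_1}, \chi^{e_2}, \chi^{e_3}$:
\[
D_1 \sim D_3 \sim D_0 - D_2, \qquad D_4 \sim D_0 + D_2.
\]

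First I would enumerate the top-dimensional cones of $\Sigma$ and $\Sigma'$. For $\Sigma$ these are the four cones $\Cone(u_0, u_i, u_{i+1})$ (indices mod $4$, with $u_5 = u_1$) together with the two cones $\Cone(u_1,u_2,u_4)$ and $\Cone(u_2,u_3,u_4)$ coming from $\Sigma_+$; for $\Sigma'$ the last two are replaced by $\Cone(u_1,u_2,u_3)$ and $\Cone(u_1,u_3,u_4)$ from $\Sigma_-$. Computing the determinants of the $3\times 3$ matrices whose columns are the primitive generators, one finds $\mult(\Cone(u_0,u_1,u_2)) = \mult(\Cone(u_0,u_2,u_3)) = 2$ and every other top cone has multiplicity $1$. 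Formula \eqref{eq:intersection-toric-divisors} then gives every triple product of distinct $D_\rho$'s; the crucial vanishings to record are $D_0\cdot D_2\cdot D_4=0$ on both varieties, $D_1\cdot D_3 = 0$ on $X$, and $D_2\cdot D_4 = 0$ on $X'$ (none of these supports is a cone of the respective fan).

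With the triple intersections tabulated, part (1) follows by a single substitution. Writing $D_0 \sim D_4 - D_2$ to get rid of one factor of $D_0$ in the square, I obtain for example
\[
D_\rho\cdot D_0^2 = D_\rho\cdot D_0\cdot(D_4 - D_2) = D_0 D_\rho D_4 - D_0 D_\rho D_2,
\]
and similarly $D_\rho\cdot D_0\cdot D_2$ is already a triple product. This immediately gives $D_1\cdot D_0^2$, $D_4\cdot D_0^2$, $D_0^3 = D_0^2(D_1+D_2)$, and all three mixed $D_0 D_2 D_\rho$ numbers, which are identical on $X$ and $X'$ because the triple products involved coincide on the two fans.

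The value of $D_2\cdot D_0^2$ (and equivalently $D_0\cdot D_2^2$) is the delicate case, and is the main obstacle in part (1): one linear substitution gives only the tautology $D_0 D_2^2 = -D_0^2 D_2$ because $D_0 D_2 D_4 = 0$. I would break the symmetry by using a \emph{second} equivalence $D_2 \sim D_0 - D_1$, yielding $D_0 D_2^2 = D_0^2 D_2 - D_0 D_1 D_2$; combining the two linear relations gives a $2\times 2$ system whose solution is $D_0^2 D_2 = \tfrac14$, $D_0 D_2^2 = -\tfrac14$ on both $X$ and $X'$. Finally, for parts (2) and (3), I would compute $D_\rho\cdot D_2^2$ by writing $D_2\sim D_4-D_0$ or $D_2\sim D_0-D_3$ as needed so as to produce triple products of three distinct divisors; the asymmetry between $X$ and $X'$ enters here through the distinct vanishings $D_2 D_3 D_4 = 1$ on $X$ versus $0$ on $X'$ and $D_1 D_2 D_4 = 1$ on $X$ versus $0$ on $X'$, which is precisely what produces the different numerical answers. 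For $D_2^3$, I would use $D_2\sim D_0 - D_1$ to reduce to $D_0 D_2^2$ and $D_1 D_2^2$ which were just computed.
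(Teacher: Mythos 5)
Your proposal is correct and follows essentially the same route as the paper: compute the lattice indices (multiplicities) of the maximal cones, apply the formula (\ref{eq:intersection-toric-divisors}) for products of distinct invariant divisors, and use the linear equivalences from (\ref{eq:divisor-of-character}) to eliminate repeated factors; all the multiplicities and key vanishings you record, and all the final numbers, check out. Two small remarks. First, for the one term you call delicate, $D_2\cdot D_0^2$, the paper avoids your $2\times 2$ system by using the combined relation $2D_0 \sim_{\rm lin} D_3+D_4$, which gives $D_2\cdot D_0^2=\tfrac12 D_2\cdot D_0\cdot(D_3+D_4)=\tfrac14$ in one line; your system is equally valid. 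Second, the substitution $D_0\sim D_4-D_2$ does \emph{not} ``immediately'' give $D_4\cdot D_0^2$, since it produces $D_0\cdot D_4^2$, another self-intersection (and substituting $D_4\sim D_0+D_2$ there just circles back); for that term you need one of the other listed equivalences, e.g. $D_0\sim D_2+D_3$, giving $D_4\cdot D_0\cdot(D_2+D_3)=0+1=1$. This is a trivially repairable slip in bookkeeping, not a gap in the method.
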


\begin{proof}
The lemma follows from Formula (\ref{eq:intersection-toric-divisors}) and Table \ref{tab:index-sublatice}.
\begin{table}[t]
\begin{tabular}{|c|c|c|c|}
\hline 
$\Z u_0 + \Z u_1 + \Z u_2$ & $\Z u_0 + \Z u_2 + \Z u_3$ & $\Z u_0 + \Z u_3 + \Z u_4$ &
$\Z u_0 + \Z u_4 + \Z u_1$\\ 
\hline 
$2$ & $2$ & $1$ & $1$ \\ 
\hline 
\end{tabular} 

\vspace{0.5cm}
\begin{tabular}{|c|c||c|c|}
\hline 
$\Z u_2 + \Z u_4 + \Z u_1$ & $\Z u_2 + \Z u_4 + \Z u_3$ & $\Z u_1 + \Z u_3 + \Z u_2$ &
$\Z u_1 + \Z u_3 + \Z u_4$\\ 
\hline 
$1$ & $1$ & $1$ & $1$ \\ 
\hline 
\end{tabular}

\caption{Sublattices and their index in $\Z^3$}
\label{tab:index-sublatice}
\end{table}
We show the first point to illustrate the computations, the other intersection numbers follow in the same way.
\begin{align*}
D_1 \cdot D_0^2 & = D_1 \cdot D_0 \cdot (D_2 + D_3) = D_1 \cdot D_0 \cdot D_2 +
D_1 \cdot D_0 \cdot D_3 = \dfrac{1}{2}
\\
D_2 \cdot D_0^2 &= \dfrac{1}{2} D_2 \cdot D_0 \cdot (D_3 + D_4) = \dfrac{1}{4}
\\
D_4 \cdot D_0^2 & = D_4 \cdot D_0 \cdot (D_2 + D_3) = 1
\\
D_0 \cdot D_0^2 &= \dfrac{1}{2}(D_3 + D_4) \cdot D_0^2 = \dfrac{1}{4} + \dfrac{1}{2}=
\dfrac{3}{4}.
\end{align*}
\end{proof}

We assume that $\cE = \cT_X$. By \cite[Corollary 2.2.17]{DDK20}, the family of filtrations of the tangent sheaf $\cT_X$ of $X$ is given by:
$$
E^{\rho}(j) = \left\lbrace
\begin{array}{ll}
0 & \text{if}~ j< -1 \\
\Span(u_{\rho}) & \text{if}~ j = -1 \\
N \otimes_{\Z} \C & \text{if} ~ j> -1
\end{array}
\right..
$$
In this case, the inequalities of Proposition \ref{prop:stabilityequivariantcase} become
$$
\dfrac{1}{\dim(F)} \sum_{u_\rho \in F} \deg_{L}(D_\rho) \stackrel{(\leq)}{<}
\dfrac{1}{n} \sum_{\rho \in \Sigma(1)} \deg_{L}(D_\rho).
$$
Therefore, to study the stability of $\cE$, it suffices to consider the vector spaces $F$ given by
$$
F = \Span(u_\rho: \rho \in \Gamma)
$$
with $\Gamma \subseteq \Sigma(1)$.

Let $L_0 = D_0$ be an ample Cartier divisor on $X_0$. We have
$$
\mu_{L}(\phi_* \cE) = 1 = \dfrac{1}{\dim(F_1)} \sum_{u_\rho \in F_1} \deg_{L}(D_\rho) =
\dfrac{1}{\dim(F_2)} \sum_{u_\rho \in F_2} \deg_{L}(D_\rho)
$$
where $F_1 = \Span(u_4)$ and $F_2 = \Span(u_0, u_2, u_4)$. As for any vector subspace
$F \subsetneq E$ such that $F \notin \{F_1, F_2 \}$, one has
$$
\dfrac{1}{\dim(F)} \sum_{u_\rho \in F} \deg_{L}(D_\rho) < 1,
$$
we deduce that $\phi_* \cE$ is semistable. We denote respectively by $\cF$ and $\cF'$ the
subsheaf of $\phi_* \cE$ given by the families of filtrations $(F_1, F_1 \cap E^\rho(i))$
and $(F_2, F_2 \cap E^\rho(i))$. We have
$$
0 \subseteq \cF \subseteq \cF' \subseteq \cE.
$$
Let $$D_{+}= D_2 + D_4.$$
We have 
$$D_{+} \sim_{\rm lin} D_0 + 2 D_2.$$
So
$$
L_{-\ep} = \phi^*L_0 - \ep D_+ \sim_{\rm lin} (1-\ep)D_0 - 2 \ep D_2 =
(1- \ep) \left(D_0 - \dfrac{2 \ep}{1 - \ep} D_2 \right)
$$
and
$$
L_{\ep} = (\phi')^*L_0 + \ep D_+ \sim_{\rm lin} (1+\ep) D_0 + 2 \ep D_2 =
(1+ \ep) \left(D_0 + \dfrac{2 \ep}{1 + \ep} D_2 \right).
$$
According to \ref{point:theo-main3} and \ref{point:theo-main4} of Theorem \ref{theo:main},
to check stability of $\cE$ with respect to $L_{-\ep}$, it suffices to compare
$\mu_{L_{-\ep}}(\cE)$ with $\mu_{L_{-\ep}}(\phi^* \cF)$ and $\mu_{L_{-\ep}}(\phi^* \cF')$.
%
%
By Lemma \ref{lem:intersection-example}, on $X$ we have
\begin{equation}\label{eq:degree-divisorX}
\begin{aligned}
& \deg_{L_{-\ep}}(D_1)= \dfrac{1}{2} - 3 \ep + \dfrac{9}{2} \ep^2 & \qquad &
\deg_{L_{-\ep}}(D_2) = \dfrac{1}{4} + \dfrac{1}{2} \ep - \dfrac{15}{4} \ep^2
\\
& \deg_{L_{-\ep}}(D_4) = 1 - 2 \ep - 3 \ep^2 & &
\deg_{L_{-\ep}}(D_0) = \dfrac{3}{4} - \dfrac{5}{2} \ep + \dfrac{3}{4} \ep^2;
\end{aligned}
\end{equation}
and on $X'$ we have
\begin{equation}\label{eq:degree-divisorX2}
\begin{aligned}
& \deg_{L_{\ep}}(D_1)= \dfrac{1}{2} + 3 \ep + \dfrac{1}{2} \ep^2 & \qquad &
\deg_{L_{\ep}}(D_2) = \dfrac{1}{4} - \dfrac{1}{2} \ep + \dfrac{1}{4} \ep^2
\\
& \deg_{L_{\ep}}(D_4) = 1 + 2 \ep + \ep^2 & &
\deg_{L_{\ep}}(D_0) = \dfrac{3}{4} + \dfrac{5}{2} \ep + \dfrac{3}{4} \ep^2.
\end{aligned}
\end{equation}
By Formulas (\ref{eq:degree-divisorX}), we get
$$
\mu_{L_{-\ep}}(\cE) = 1 - \dfrac{10}{3} \ep + \ep^2
\quad \text{and} \quad
\mu_{L_{-\ep}}(\cF) = \mu_{L_{-\ep}}(\cF') = 1 - 2 \ep - 3 \ep^2.
$$
On the other hand, by Formulas (\ref{eq:degree-divisorX2}), we have
$$
\mu_{L_{\ep}}(\psi_* \cE) = 1 + \dfrac{10}{3} \ep + \ep^2 \quad \text{and} \quad
\mu_{L_{\ep}}((\phi')^* \cF) = \mu_{L_{\ep}}((\phi')^* \cF') = 1 + 2 \ep + \ep^2.
$$
Hence, there is $\ep_0$ such that for any $\ep \in (0, \ep_0) \cap \Q$:
\begin{itemize}
\item the tangent sheaf $\cT_X$ is unstable with respect to $L_{-\ep}$;
\item the tangent sheaf $\cT_{X'}$ is stable with respect to $L_{\ep}$.
\end{itemize}
This example clearly shows point \ref{point:theo-main4} of Theorem \ref{theo:main}.

\section{Flips and stability for logarithmic subcategories}
\label{sec:secondtheorem}
Consider a toric flip
$$
 \begin{tikzcd}
  X   &  & X' \arrow[ddl, rightarrow,"{\phi'}"']\arrow[ll,dashleftarrow, "{\psi}"'] \\
   & & \\
  & X_0 \arrow[uul, leftarrow, "{\phi}"'] &  
\end{tikzcd}
 $$
 as in Section \ref{sec:toricflips}. Fix $\Delta\subset \Sigma(1)$, and introduce the divisor
 $$
 D:=\sum_{\rho\in \Delta} D_\rho
 $$
seen as a divisor on $X$, $X'$ and $X_0$. We consider the equivalent categories $\Ref^T(X,D)$ and $\Ref^T(X',D)$ as in Section \ref{sec:logcase}. We will say that 
$$\psi_* : \Ref^T(X,D) \to \Ref^T(X',D)$$
{\it preserves polystability} for the pair of polarisations $(\alpha, \alpha')\in \Pic(X)_\Q\times \Pic(X')_\Q$ if for any $\cE\in\mathrm{Obj}(\Ref^T(X,D))$, $\cE$ is $\alpha$-polystable if and only if $\psi_*\cE$ is $\alpha'$-polystable.
\begin{theorem}
 \label{theo:second}
 The following assertions are equivalent, for a pair of ample classes $(\alpha,\alpha')\in \Pic(X)_\Q\times \Pic(X')_\Q$.
 \begin{enumerate}[label=(\roman*)]
  \item The functor $\psi_* : \Ref^T(X,D) \to \Ref^T(X',D)$ preserves polystability for $(\alpha,\alpha')$. \label{point:theo-second1}
  \item There is $c \in\Q_{>0}$ such that for all $\rho \notin \Delta$, $\deg_\alpha D_\rho = c\; \deg_{\alpha'} D_\rho'$. \label{point:theo-second2}
 \end{enumerate}
\end{theorem}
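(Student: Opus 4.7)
The plan is to exploit a simplification that is specific to $\Ref^T(X,D)$: objects in this subcategory have trivial filtrations along rays in $\Delta$, and this contribution cancels identically in slope differences. Concretely, the defining condition \eqref{eq:definingconditionlogcategory} gives $\iota_\rho(\cE) = a_\rho \rank(\cE)$ for every $\rho \in \Delta$, and the same identity holds for any saturated equivariant reflexive subsheaf $\cF$ corresponding to a subspace $F \subset E$, since $F \cap E^\rho(i)$ is again a trivial filtration with the same jump $a_\rho$. Combining this with Klyachko's formula \eqref{eq:slope}, and using that the families of filtrations (and hence the $\iota_\rho$'s) are left invariant by $\psi_*$, I obtain
\begin{equation*}
\mu_\alpha(\cF) - \mu_\alpha(\cE) \;=\; -\sum_{\rho \notin \Delta} \left( \frac{\iota_\rho(\cF)}{\rank(\cF)} - \frac{\iota_\rho(\cE)}{\rank(\cE)} \right) \deg_\alpha D_\rho,
\end{equation*}
and similarly on $(X',\alpha')$ for $\psi_*\cE$ and $\psi_*\cF$.

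For the implication (ii) $\Rightarrow$ (i), I would substitute $\deg_\alpha D_\rho = c\, \deg_{\alpha'} D_\rho$ for $\rho \notin \Delta$ into the above to obtain $\mu_\alpha(\cF) - \mu_\alpha(\cE) = c\,(\mu_{\alpha'}(\psi_*\cF) - \mu_{\alpha'}(\psi_*\cE))$ for any saturated equivariant reflexive sub $\cF \subset \cE$. Positivity of $c$ preserves signs, so by Proposition \ref{prop:stabilityequivariantcase} slope (semi)stability of $\cE$ on $(X, \alpha)$ in $\Ref^T(X,D)$ is equivalent to slope (semi)stability of $\psi_*\cE$ on $(X',\alpha')$. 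For polystability I would use that $\Ref^T(X,D)$ is abelian: a decomposition $\cE = \bigoplus \cE_i$ into $\alpha$-stable summands of common $\alpha$-slope is carried by $\psi_*$ to a decomposition into $\alpha'$-stable summands, and the same proportionality applied to the inclusions $\cE_i \hookrightarrow \cE$ shows that all $\alpha'$-slopes remain equal, so $\psi_*\cE$ is $\alpha'$-polystable.

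The converse (i) $\Rightarrow$ (ii) is the main point, and I would prove it by constructing, for each ordered pair $\rho_1, \rho_2 \in \Sigma(1) \setminus \Delta$, a rank two test sheaf in $\Ref^T(X,D)$ whose polystability pins down the ratio $\deg_\alpha D_{\rho_1}/\deg_\alpha D_{\rho_2}$. Fix positive integers $b_1, b_2$, set $E = \C e_1 \oplus \C e_2$, and define the family of filtrations by: $E^{\rho_j}(i) = 0$ for $i < 0$, $\langle e_j \rangle$ for $0 \le i < b_j$, and $E$ for $i \ge b_j$ (for $j = 1, 2$); the trivial filtration jumping at $0$ for every other $\rho \in \Sigma(1)$. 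This defines an object $\cE$ of $\Ref^T(X,D)$. A direct computation with Klyachko's formula gives $\mu_\alpha(\cE) = -\tfrac{1}{2}(b_1 \deg_\alpha D_{\rho_1} + b_2 \deg_\alpha D_{\rho_2})$, $\mu_\alpha(\langle e_1 \rangle) = -b_2 \deg_\alpha D_{\rho_2}$ and $\mu_\alpha(\langle e_2 \rangle) = -b_1 \deg_\alpha D_{\rho_1}$, while any other one-dimensional subspace has strictly smaller slope by positivity of $\deg_\alpha D_{\rho_j}$ for ample $\alpha$. Hence $\cE$ is $\alpha$-semistable iff $b_1 \deg_\alpha D_{\rho_1} = b_2 \deg_\alpha D_{\rho_2}$, in which case the family of filtrations visibly splits as $\langle e_1 \rangle \oplus \langle e_2 \rangle$ and $\cE$ becomes polystable. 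Since $\alpha \in \Pic(X)_\Q$ and $\deg_\alpha D_{\rho_j} > 0$, I can choose $b_1, b_2 \in \Z_{>0}$ realizing this equality, and by assumption $\psi_*\cE$ is then $\alpha'$-polystable, which by the same computation on $X'$ forces $b_1 \deg_{\alpha'} D_{\rho_1} = b_2 \deg_{\alpha'} D_{\rho_2}$. Dividing, I get
\begin{equation*}
\frac{\deg_\alpha D_{\rho_1}}{\deg_{\alpha'} D_{\rho_1}} \;=\; \frac{\deg_\alpha D_{\rho_2}}{\deg_{\alpha'} D_{\rho_2}},
\end{equation*}
and this common ratio, being independent of the pair, is the desired $c \in \Q_{>0}$. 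The degenerate cases $|\Sigma(1) \setminus \Delta| \le 1$ are immediate: (ii) is vacuous, and (i) follows directly from the simplified slope formula of the first paragraph.

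The main obstacle will be the case analysis in the converse direction: one must check that the test sheaf genuinely lies in $\Ref^T(X,D)$ and is reflexive (a short verification against Klyachko's equivalence); that among all $1$-dimensional $F \subset E$ the only ones that can violate semistability are the two coordinate lines, which reduces to computing $\iota_{\rho_j}(F) = b_j$ whenever $F$ is transverse to both $\langle e_1 \rangle$ and $\langle e_2 \rangle$; and finally that semistability at the critical relation upgrades to polystability via the explicit direct sum of families of filtrations. Each step is routine, but positivity of the degrees $\deg_\alpha D_\rho$ for ample $\alpha$ is essential throughout.
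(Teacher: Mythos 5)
Your proof is correct and follows essentially the same route as the paper: the implication $(ii)\Rightarrow(i)$ via the cancellation of the $\Delta$-contributions in Kool's slope formula, and $(i)\Rightarrow(ii)$ via a rank-two polystable test sheaf attached to each pair of rays outside $\Delta$ (your hand-built family of filtrations splits as $\cO_X(-b_2 D_{\rho_2})\oplus\cO_X(-b_1 D_{\rho_1})$, which is, up to a sign of the twist, exactly the paper's test object $\cO_X(d\deg_\alpha(D_{\rho_2})D_{\rho_1})\oplus\cO_X(d\deg_\alpha(D_{\rho_1})D_{\rho_2})$). The only difference is that you spell out the semistability/polystability analysis of the test sheaf that the paper leaves implicit.
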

We stated Theorem \ref{theo:second} for flips, but the proof that follows works for any small birational equivariant map between two normal toric varieties (that is a birational and equivariant map that is an isomorphism away from codimension $2$ subsets). We thank the anonymous referee for pointing this out.
\begin{proof}
Recall the formula
$$
 \mu_\alpha(\cE)=-\frac{1}{\rank(\cE)}\sum_{\rho\in\Sigma(1)} \iota_\rho(\cE)\, \deg_\alpha(D_\rho)
$$
for the slope, with 
$$
 \iota_\rho(\cE):=\sum_{i\in\Z} i \left(\dim(E^\rho(i))-\dim (E^\rho(i-1))\right)
 $$
 where $(E,E^\rho(\bullet))_{\rho\in\Sigma(1)}$ stands for the family of filtrations associated to $\cE$. Then, by definition of the logarithmic category $\Ref^T(X,D)$ (see Equation (\ref{eq:definingconditionlogcategory})), for any $\cE\in\mathrm{Obj}(\Ref^T(X,D))$, and any $\rho\in\Delta$, we have
 $$
 \iota_\rho(\cE)= a_\rho \dim(E),
 $$
 so the slope reads
 $$
 \mu_\alpha(\cE)=-\frac{1}{\rank(\cE)}\sum_{\rho\notin\Delta} \iota_\rho(\cE)\, \deg_\alpha(D_\rho) - \sum_{\rho \in \Delta} a_\rho \deg_{\alpha}(D_\rho).
 $$
 Note also that by construction, for any $\rho$,
 $$
 \iota_\rho(\cE)=\iota_\rho(\psi_*\cE).
 $$
 Then, $(ii) \Rightarrow (i)$ is straightforward.
%
%
To prove $(i) \Rightarrow (ii)$, we argue as in \cite[proof of Proposition 4.8]{CT22}, and consider for any pair $(\rho_1,\rho_2)\in (\Sigma(1) \setminus \Delta)^2$ the polystable toric sheaf 
 $$
 \cE=\cO_X(d\deg_\alpha(D_{\rho_2}) D_{\rho_1})\oplus\cO_X(d\deg_\alpha(D_{\rho_1}) D_{\rho_2}),
 $$
 where $d$ is the common denominator of $\deg_\alpha(D_{\rho_2})$ and $\deg_\alpha(D_{\rho_1})$.
 Its image by $\psi_*$ is 
 $$
 \psi_*\cE=\cO_{X'}(d\deg_\alpha(D_{\rho_2}) D_{\rho_1}')\oplus\cO_{X'}(d\deg_\alpha(D_{\rho_1}) D_{\rho_2}').
 $$
 As $\cE\in\mathrm{Obj}(\Ref^T(X,D))$ (see e.g. \cite[Example 2.2.13]{DDK20} for the family of filtrations of rank one toric sheaves), if $\psi$ preserves polystability, we must have
 $$
 \deg_\alpha(D_{\rho_2}) \deg_{\alpha'} (D_{\rho_1}')=\deg_\alpha(D_{\rho_1}) \deg_{\alpha'} (D_{\rho_2}').
 $$
 The result follows.
\end{proof}

\begin{remark}
The reason for considering logarithmic subcategories is the following.
If one considers the case $\Delta = \varnothing$, i.e the full $\Ref^T(X)$, then for no choice of $(\alpha,\alpha')$ polystability is preserved.
Indeed, by \ref{point:theo-second2} of Theorem \ref{theo:second}, for any $\rho \in \Sigma(1)$, one must have
$$
\deg_\alpha D_\rho = c\; \deg_{\alpha'} D_\rho'.
$$
Up to scale, we can assume $c=1$. But then, a result due to Minkowski (\cite[p.455]{Sch}), translated into the toric setting in \cite[Proposition 5.3 and Corollary 5.4]{CT22}, implies that the polytope associated to $(X,\alpha)$ equals the polytope associated to $(X',\alpha')$, which is absurd, as $X$ and $X'$ are not isomorphic.
\end{remark}

\begin{remark}
Let $L_0$ be an ample Cartier divisor on $X_0$ and let $\ep >0$ be such that the divisors $L_{-\ep}= \phi^* L_0 - \ep D_{+}$ on $X$ and $L_{\ep}= (\phi')^* L_0 + \ep D_{+}$ on $X'$ define $\Q$-ample Cartier divisors.
If $\psi_* : \Ref^T(X,D) \to \Ref^T(X',D)$ preserves polystability for $(L_{-\ep}, L_{\ep})$ then according to \ref{point:theo-second2} of Theorem \ref{theo:second}, for any $\rho \notin \Delta$, we have
$$
\deg_{L_{-\ep}}(D_\rho) = \deg_{L_{\ep}}(D_\rho).
$$
because the constant term in the $\ep$-expansions of $\deg_{L_{-\ep}}(D_\rho)$ and $\deg_{L_{-\ep}}(D_\rho)$ are equal. Therefore,
$$
\Delta \subseteq \left\lbrace \rho \in \Sigma(1): D_\rho \cdot D_{+} \cdot (\phi^* L_0)^{n-2} \neq 0 \right\rbrace.
$$
\end{remark}

\bibliographystyle{plain}	
 \bibliography{Flips}

\end{document}